\documentclass[10pt]{amsart}

\pagestyle{plain}
\usepackage{amssymb, amsmath, amsthm}
\newtheorem{thm}{Theorem}[section]
\newtheorem{lem}[thm]{Lemma}

\newtheorem{prop}[thm]{Proposition}
\newtheorem{defn}[thm]{Definition}
\newtheorem{rmk}{Remark}

\numberwithin{equation}{section}

\newcommand{\bel}{\begin{equation} \label}
\newcommand{\ee}{\end{equation}}
\def\beq{\begin{equation}}
\def\eeq{\end{equation}}
\newcommand{\bea}{\begin{eqnarray}}
\newcommand{\eea}{\end{eqnarray}}
\newcommand{\beas}{\begin{eqnarray*}}
\newcommand{\eeas}{\end{eqnarray*}}
\newcommand{\pd}{\partial}

\newcommand{\R}{\mathbb{R}}

\newcommand{\re}{\mathfrak R}
\newcommand{\im}{\mathfrak I}

%

\newcommand{\supp}{\mathrm{supp}\,}  

\allowdisplaybreaks

\def\epsilon{\varepsilon}
\def\phi {\varphi}

\providecommand{\abs}[1]{\left\lvert#1\right\rvert}
\providecommand{\norm}[1]{\left\lVert#1\right\rVert}

\renewcommand{\leq}{\leqslant}
\renewcommand{\geq}{\geqslant}
\providecommand{\abs}[1]{\left\lvert#1\right\rvert}
\providecommand{\norm}[1]{\left\lVert#1\right\rVert}

\usepackage{geometry}
\newgeometry{asymmetric}

\usepackage[colorlinks=true, citecolor=blue]{hyperref}

\def\p{\pd}

\def\M{\overline M}
\def\g{{\overline g}}

\title{Recovery of time-dependent coefficient on Riemanian manifold for hyperbolic equations}
\author{Yavar Kian, Lauri Oksanen}

\date{}
\begin{document}
\maketitle
\begin{abstract}
Given $(M,g)$, a compact connected Riemannian manifold of dimension $d \geq 2$, with boundary $\partial M$, we study 
the inverse boundary value problem of  determining   a time-dependent potential $q$, appearing in   the wave equation $\partial_t^2u-\Delta_g u+q(t,x)u=0$ in $\M=(0,T)\times M$ with $T>0$.
Under suitable geometric assumptions we prove global unique determination of  $q\in L^\infty(\M)$ 
given the Cauchy data set on the whole boundary $\p \M$,
or on certain subsets of $\p \M$.
Our problem can be seen as an analogue of the Calder\'on problem on the Lorentzian manifold $(\M, dt^2 - g)$. 

\medskip
\noindent
{\bf  Keywords:} Inverse problems, wave equation on manifold, time-dependent potential, uniqueness,  partial data, Carleman estimates.\\

\medskip
\noindent
{\bf Mathematics subject classification 2010 :} 35R30, 	35L05, 58J45.
\end{abstract}

\section{Introduction}
\subsection{Formulation of the problem}
Let $(M,g)$ be a smooth Riemanian manifold with boundary and let $T>0$.  We introduce the Laplace and wave operators
\begin{align}
\label{def_Laplace}
\Delta_{g} u = |g|^{-1/2} \p_{x^j}
 \left( g^{jk} |g|^{1/2} \p_{x^k} u\right),
 \quad 
 \Box_g=\partial_t^2-\Delta_g,
\end{align}
where $|g|$ and $g^{jk}$ denote the absolute of value of the determinant and the inverse of $g$
in local coordinates, and consider the wave equation 
\begin{equation}\label{wave}\Box_g u+q(t,x)u=0,\quad (t,x)\in (0,T)\times M,\end{equation}
with $q\in L^\infty((0,T)\times M)$. Let $\nu$ be the outward unit normal vector to $\partial M$ with respect to the metric $g$ and let $\partial_\nu$ be the corresponding normal derivative. 
Moreover, we define $\p_{\overline \nu} = \p_\nu$ on the lateral surface 
$(0,T) \times \p M$, $\p_{\overline \nu} = \p_t$ on the top surface $\{T\} \times M$ and $\p_{\overline \nu} = -\p_t$ on the bottom surface $\{0\} \times M$, and consider the Cauchy data set on the boundary of the cylinder $\M = (0,T) \times M$,
\bel{cq}\mathcal C_{q}=
\{(u_{|\p \M},\partial_{\overline \nu}u_{|\p \M}):\  u\in L^2(\M),\ \Box_gu+qu=0\}.\ee
In this paper we study the inverse boundary value problem to recover the time-dependent zeroth order term $q$ appearing in \eqref{wave} from partial knowledge of the set $\mathcal C_{q}$.

There are several previous results on the problem, and we shall review them below, however to our knowledge, all of them assume either that $(M,g)$ is a domain in $\R^n$ with the Euclidean metric or that $q$ is time-independent. 

In the case of time-independent $q$ it is enough to know the 
following lateral restriction of $\mathcal C_{q}$,
\begin{equation*}
\mathcal C_q^{\text{Lat}}=\{(u_{|(0,T)\times \partial M},\partial_\nu u_{|(0,T)\times \partial M}):\ u\in L^2(\M),\ \Box_g u+qu=0,\ u_{|t=0}=\partial_tu_{|t=0}=0\},
\end{equation*}
for sufficiently large $T > 0$, in order to determine $q(x)$ for all $x \in M$, see \cite{[BD], Katchalov2001,Mo,RS1}.
However, if $q$ depends on time, due to domain of dependence argument,
the data $\mathcal C_q^{\text{Lat}}$ contains no information on
the restriction of $q$ on the set 
\def\dist{\textrm{dist}}
\begin{align}
\label{lateral_obstruction}
\{(t,x)\in \M:\ 
\dist(x,\partial M)> t \text{ or } \dist(x,\partial M) > T - t\}.
\end{align} 
Here $\dist(\cdot, \cdot)$ is the distance function on $(M,g)$.
Facing this obstruction to the uniqueness, 
all the results in the present paper assume some information 
on the top $\{T\} \times M$ and bottom $\{0\} \times M$ surfaces. 
In particular, 
under the assumption that $(M,g)$ is a simple manifold, see Definition \ref{def_simple} below, 
we show that the full Cauchy data set $\mathcal C_{q}$
determines $q$ uniquely. The precise formulations of our results, with partial knowledge of the set $\mathcal C_{q}$, are in Section \ref{sec_main_results} below.

\subsection{Physical and mathematical motivations}
\label{sec_motivation}

Let us begin with a mathematical motivation:
the problem to determine $q$ given $\mathcal C_{q}$
can be seen as a hyperbolic analogy of the Calder\'on problem on a cylinder as stated in \cite{DKSU}.
Indeed, denoting by $dt^2 - g$ the product Lorentzian metric on $\M$, the wave operator $\Box_g$ coincides with the 
Laplace operator on $(\M, dt^2 - g)$.
On the other hand, denoting by $\g = dt^2 + g$
the Riemannian product metric on $\M$, and choosing a smooth domain 
$\Omega \subset \M$, we can formulate the the Calder\'on problem on a cylinder as follows: given the elliptic Cauchy data set 
$$
\mathcal C_{q}^{\text{Ell}} = 
\{(u_{|\p \Omega},\partial_{\overline \nu}u_{|\p \Omega}):\  u\in L^2(\Omega),\ \Delta_\g u+qu=0\}
$$
determine $q$ (here $\overline \nu$ is the outward unit normal vector to $\p \Omega$). In \cite{DKSU} this problem was solved under the assumption that $(M,g)$ is a simple manifold. 

One reason to study these problems 
is to gain some understanding of the fundamental problem to determine, up to an isometry, a smooth Riemannian or Lorentzian manifold $(\Omega, \g)$ with boundary given the set of Cauchy data
$$
\mathcal C(\g) = \{(u|_{\p \Omega}, \p_{\overline \nu} u|_{\p \Omega});\ u \in L^2(\Omega),\ \Delta_\g u =0 \}.
$$
Excluding results where full or partial real analyticity is assumed, this problem is open in dimensions three or higher, in both the elliptic and hyperbolic cases. 
The relation to the present problem to determine $q$ given $\mathcal C_{q}$ is as follows.
In the case when $(\Omega, \g)$ is a subset of the conformal cylinder 
\begin{align}
\label{cylinder}
\M = (0,T) \times M, \quad
\g = c(dt^2 + g),
\end{align}
where only the strictly positive conformal factor $c \in C^2(\M)$ is assumed to be unknown, the problem to determine $c$ given $\mathcal C(\g)$ can be reduced to the problem to determine $q$ given $\mathcal C_{q}^{\text{Ell}}$ via a gauge transformation. 
Indeed, as explained e.g. in \cite{DKLS},
the function $v = c^{(d-1)/4} u$
satisfies $\Delta_{\g} v + q_c v = 0$
if the function $u$ satisfies $\Delta_{\g} u = 0$, where $d$ is the dimension of $M$
and 
$$
q_c = c^{-(d-1)/4} \Delta_{\g} c^{(d-1)/4}.
$$ 
This allows 
us to first determine $\mathcal C_{q_c}^{\text{Ell}}$ given $\mathcal C(\g)$, then
solve the inverse boundary value problem for $q_c$, and finally determine $c$ given $q_c$. The argument can be adapted also to the hyperbolic case.

From the physical point of view, our inverse problem consists of determining properties such as the time evolving  density of an inhomogeneous medium by probing it with disturbances generated on  some parts of the boundary and at  initial time, and by measuring  the response  on some parts of the boundary and at the end of the experiment. 


Time-dependent zeroth order terms appear often also due to mathematical reductions of nonlinear problems. 
For example, in \cite{I2} Isakov applied results on inverse boundary value problems with time-dependent coefficients in order to prove  unique recovery of a general semilinear term appearing in
a nonlinear parabolic equation from traces of all the solutions to the equation. More recently, applying their results of stable recovery of time-dependent coefficients from the parabolic Dirichlet-to-Neumann map, \cite{CK2} treated  the stability issue for this problem.
In the same spirit our inverse problem  can  be a tool  for the  problem of determining a semilinear term appearing in a nonlinear wave equation from observations given by traces of the solutions. We point out that
with this application in mind, it is important 
to treat non-smooth potentials $q$.

\subsection{Previous literature}

The recovery of coefficients appearing in hyperbolic equations is 
a topic that has attracted considerable attention. 
Several authors have treated the determination of time-independent coefficients from Cauchy data analogous to $\mathcal C_q^{\text{Lat}}$ above. 
In this case, the Boundary Control method, originating from \cite{Bel87}, 
gives very general uniqueness results when combined with 
the time-sharp unique continuation theorem \cite{T}.
We refer to \cite{Kurylev2015,LO} for state-of-the-art results and to
\cite{Belishev2007,Katchalov2001}
for reviews. However, as shown in \cite{Alinhac1983}, unique continuation analogous to \cite{T} may fail in the presence of time-dependent zeroth order terms, and the Boundary Control method generalizes only to the case where the dependence on time is real analytic 
\cite{E2}.

Another approach, that is constrained in the time-independent case, is
the Carleman estimates based approach originating from \cite{BK}.
Using this approach it can be shown, under suitable geometric assumptions, that a single well-chosen element $\mathcal C_q$ determines $q$.
The approach gives also strong stability results.

Let us now turn to the approach underpinning most of the results in the time-dependent case, including the results in the present paper, that is, the use of geometric optics solutions. 
This approach is widely applied also to time-independent case,
and the data used then is typically the same as in the case of the Boundary Control method, that is, $\mathcal C_q^{\text{Lat}}$.
Although the geometric optics approach gives less sharp uniqueness results in terms of geometrical assumptions than the Boundary Control method, the advantage of the former is that it yields stronger stability results \cite{[BD],BJY,Ki1,Mo,SU,SU2} than the latter \cite{Anderson2004},
however, see \cite{LO2} for a strong low-frequency stability result using ideas from the Boundary Control method. 

Apart from \cite{E2}, all the above results are concerned with time-independent coefficients.
The geometric optics approach in the time-dependent case was
first used by Stefanov. In \cite{St} he determined a time-dependent potential in a wave equation, with constant coefficients in the leading order, from scattering data 
via a reduction to the light-ray transform in the Minkowski space. 
A similar strategy was used by Ramm and Sj\"ostrand \cite{RS},
the difference being that instead of scattering data they used the analogy of the lateral data $\mathcal C_q^{\text{Lat}}$ in the 
infinite cylinder $\R \times M$ where $M$ is a domain in $\R^n$.
Rakesh and  Ramm \cite{RR} considered the lateral data $\mathcal C_q^{\text{Lat}}$ on a finite cylinder $(0,T)\times M$ and they determined $q$ in a subset of $(0,T)\times M$ contained in the complement of (\ref{lateral_obstruction}).
In \cite[Theorem 4.2]{I}, Isakov 
determined $q$ on the whole domain $(0,T)\times M$ from the full data $\mathcal C_{q}$.
In \cite{Ki2,Ki3}, the first author established both uniqueness and stability for the determination of a general time dependent potential $q$ from (roughly speaking) half of the data in \cite{I}.  More recently, \cite{Ki4} extended the result of \cite{Ki2,Ki3} to recovery of time-dependent damping coefficients,
and Salazar \cite{S} generalized the result of \cite{RS} to recovery of time-dependent magnetic vector potentials. 
We mention also the log-type stability result \cite{A} for the determination of time-dependent potentials from the data considered by \cite{I} and \cite{RR}.

The above results \cite{A,I,Ki2,Ki3,Ki4,RR, RS,S, St} 
all assume that  the leading order coefficients in the wave equation are constant. 
The main contribution of the present paper is to consider the recovery of a time-dependent potential in the case of non-constant leading order coefficients. 


\subsection{Main results}
\label{sec_main_results}

We prove two results on unique determination of the potential $q$. In the first result we assume that the Cauchy data set $\mathcal C_q$ is fully known on the lateral boundary $(0,T) \times \p M$ and partly restricted on the top and bottom. 
In the second result we restrict the data also on the lateral boundary. 
In both the results we impose geometric conditions on the manifold $(M,g)$, the conditions being more stringent in the second case. 
In the first case, we make the typical assumption that $(M,g)$ is simple in the sense of the following definition.

\begin{defn}
\label{def_simple}
A compact smooth Riemannian manifold with boundary $(M,g)$ is simple if it is simply connected, the boundary $\p M$ is strictly convex in the sense of the second fundamental form, and $M$ has no conjugate points. 
\end{defn}

We consider the restricted version of $\mathcal C_q$,
$$
\mathcal C(q, 0) = \{(u|_{\p \M \setminus(\{0\}\times M)}, \p_{\overline \nu} u|_{\p \M});
\  u\in L^2(\M),\ \Box_gu+qu=0,
\ u_{|t=0}=0\},
$$
and formulate our first result.
\begin{thm}\label{t1} 
Suppose that $(M,g)$ is a simple. Let $T > 0$ and 
let $q_1$, $q_2\in  L^\infty((0,T)\times M)$. 
Then
\bel{t1a}\mathcal C(q_1, 0)=\mathcal C(q_2, 0)\ee
implies that $q_1 = q_2$.
\end{thm}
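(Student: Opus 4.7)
I would follow the standard geometric-optics strategy for time-dependent coefficient inverse problems, adapted to the Riemannian setting by using the simplicity of $(M,g)$. The first step is to derive an orthogonality identity from the hypothesis \eqref{t1a}. Given $u_1\in L^2(\M)$ solving $\Box_g u_1+q_1u_1=0$ with $u_1|_{t=0}=0$, the equality of the restricted data sets supplies a matching $u_2$ solving $\Box_g u_2+q_2u_2=0$ with $u_2|_{t=0}=0$ and identical traces on the remaining faces of $\partial\M$ together with identical $\partial_{\overline\nu}u$ on the whole of $\partial\M$. The difference $w=u_1-u_2$ solves $\Box_g w+q_2w=(q_2-q_1)u_1$ and has full vanishing Cauchy data on $\partial\M$ (at $t=0$ by the initial condition, at $t=T$ by the matched top traces and $\partial_tu$, and on $(0,T)\times\partial M$ by the matched lateral trace and $\partial_\nu u$). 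Pairing against any $L^2$ solution $v$ of $\Box_gv+q_2v=0$ and applying Green's identity on $\M$ yields
\begin{equation*}
\int_\M (q_1-q_2)\,u_1\,\overline{v}\,dV_g\,dt=0,
\end{equation*}
with no side condition imposed on the test function $v$.

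Next I would construct geometric-optics solutions concentrated along null geodesics of the Lorentzian cylinder $(\M,dt^2-g)$. I embed $(M,g)$ in a slightly larger simple manifold $(M_1,g)$, pick an auxiliary point $y\in M_1\setminus M$, and work with the smooth global phase $\psi(x)=d_g(x,y)$, which satisfies $|\nabla_g\psi|_g=1$ on $M$ thanks to the absence of conjugate points and the strict convexity of $\partial M$. The phase $\phi(t,x)=t-\psi(x)$ is null on $(\M,dt^2-g)$, so the WKB ans\"atze $u_1^{\mathrm{app}}=a_1\,e^{i\lambda\phi}$ and $v^{\mathrm{app}}=a_2\,e^{i\lambda\phi}$ annihilate the $\lambda^2$-terms, and setting the $\lambda$-terms to zero gives the transport equation $(\partial_t+\langle\nabla_g\psi,\nabla_g\cdot\rangle)a_j+\tfrac12(\Delta_g\psi)a_j=0$. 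In the polar coordinates $(r,\theta)$ centred at $y$ its general solution factors as $a_j(t,r,\theta)=|g|^{-1/4}(r,\theta)\,b_j(t-r,\theta)$ for a free profile $b_j$. I would then promote $u_1^{\mathrm{app}}$ to a genuine solution $u_1=u_1^{\mathrm{app}}+R_1$ of $\Box_g u_1+q_1u_1=0$ with $u_1|_{t=0}=0$ by solving a mixed problem for $R_1$; choosing the support of $b_1$ so that $u_1^{\mathrm{app}}|_{t=0}\equiv0$ makes the initial data for $R_1$ homogeneous, and a standard energy estimate gives $\|R_j\|_{L^2(\M)}=O(\lambda^{-1})$, with $v$ built similarly but with no constraint on $b_2$.

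Substituting these into the integral identity, the phases cancel exactly in $u_1\overline{v}$, and letting $\lambda\to\infty$ leaves
\begin{equation*}
\int_\M (q_1-q_2)(t,x)\,|g|^{-1/2}(r,\theta)\,b_1(t-r,\theta)\,\overline{b_2(t-r,\theta)}\,dV_g\,dt=0.
\end{equation*}
Localizing the angular part of the profiles near a fixed direction, varying the temporal profile, the point $y\in\partial M_1$, and the sign of the phase $\phi=t\pm\psi$ turns this into the vanishing of the light-ray transform of $q_1-q_2$ along every admissible null geodesic of $(\M,dt^2-g)$. Absorbing an additional modulation $e^{i\sigma t}$ into the amplitudes (which only perturbs the transport equation by a lower-order term that vanishes as $\lambda\to\infty$) extracts the Fourier transform in $t$: for each $\sigma\in\R$ the geodesic ray transform with constant complex attenuation $i\sigma$ of $\widehat{(q_1-q_2)}(\sigma,\cdot)$ vanishes on the simple manifold $(M,g)$. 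Injectivity of this attenuated geodesic ray transform then forces $\widehat{(q_1-q_2)}(\sigma,x)\equiv0$ and Fourier inversion yields $q_1=q_2$.

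The hard part will be the initial-value constraint $u_1|_{t=0}=0$, which restricts the admissible WKB profiles and hence the family of null geodesics that the construction can actually probe. Verifying that this family is still rich enough for the light-ray inversion, and correlating the required support of $b_1$ with the choice of the auxiliary point $y\in M_1\setminus M$ and with the size of $T$ relative to the diameter of $(M,g)$, is the most delicate step; it is here that the simplicity of $(M,g)$ and the combined availability of lateral and top data are both essential, and it is also at this step that the attenuated ray transform on $(M,g)$ must be inverted uniformly in the frequency parameter $\sigma$.
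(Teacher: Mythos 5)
Your overall strategy --- the Green's identity derived from \eqref{t1a}, WKB solutions with phase built from the distance function $\psi(x)=\mathrm{dist}(y,x)$ to a point $y$ outside $M$, the transport equation solved in polar normal coordinates, and reduction to a ray transform on the simple manifold --- is the same as the paper's, and your derivation of the orthogonality identity is correct. But the two steps you yourself flag as ``the hard part'' are genuine gaps, and the paper closes each of them with a device you do not have. First, you enforce $u_1|_{t=0}=0$ by restricting the support of the profile $b_1(t-r,\theta)$, so that $b_1(s,\theta)=0$ for $s\in[-\max_M\psi,\,-\min_M\psi]$. The weight $b_1(t-r)\overline{b_2(t-r)}$ in your limiting identity then vanishes identically on the corresponding family of characteristic level sets $\{t-\psi(x)=c\}$, i.e.\ the null geodesics entering the cylinder through the bottom face are simply not probed, and you give no argument that the remaining family determines $q_1-q_2$ on the region those geodesics sweep out. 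The paper avoids any support restriction by a reflection ansatz: the constrained solution is a \emph{sum of two} WKB terms, $a_2(t,x)e^{i\sigma(\psi+t)}-a_2(-t,x)e^{i\sigma(\psi-t)}$, which cancel exactly at $t=0$; the cross term in the product of solutions carries a factor $e^{-2i\sigma t}$ and is killed by the Riemann--Lebesgue lemma, so the limiting identity \eqref{t1c} holds with the full, uncut amplitude product.

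Second, and more fundamentally, your inversion step is the vanishing of the light-ray transform of $q_1-q_2$: after your Fourier modulation $e^{i\sigma t}$ it becomes injectivity of the geodesic ray transform with constant \emph{imaginary} attenuation $i\sigma$ for every $\sigma\in\R$. On a simple manifold of dimension $\geq 3$ this is an open problem --- the paper states explicitly that invertibility of the light-ray transform on $((0,T)\times M,\ dt^2-g)$ with $(M,g)$ simple is not known --- so your final step cannot be completed as written. The paper instead takes real exponential amplitudes, so that $a_1a_2=e^{-\mu(r+t)}h(\theta)b(r,\theta)^{-1/2}$ and \eqref{t1c} becomes $I_\mu(\mathcal L_\mu q)=0$: the attenuated geodesic ray transform with constant \emph{real} attenuation $\mu>0$ applied to the Laplace transform of $q$ in $t$. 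Injectivity of $I_\mu$ for sufficiently small $\mu>0$ is known on simple manifolds (\cite[Section 7]{DKSU}, combined with the regularity result of \cite{FSU}), and holomorphy of $\mu\mapsto\mathcal L_\mu q$ in the right half-plane then forces $\mathcal L_\mu q=0$ for all $\mu>0$, hence $q=0$. To close your argument you would need either to replace the Fourier/light-ray reduction by this Laplace-transform device or to supply a proof of light-ray injectivity, and independently to repair the loss of geodesics caused by your support restriction.
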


Let us point out that an analogous result holds with the data restricted on the top $\{T\} \times  M$ rather than on the bottom $\{0\} \times M$,
and also with the time derivative $\p_t u|_{t = T}$ vanishing instead of $u|_{t=0}$. Moreover, we prove also a variation of Theorem \ref{t1} 
using the data 
$$
\mathcal C(q, 0,T) = \{(u|_{\p \M\setminus(\{0\}\times M)}, \p_{\overline \nu} u|_{\p \M\setminus(\{T\}\times M)});
\  u\in L^2(\M),\ \Box_gu+qu=0,
\ u_{|t=0}=0\}.
$$
\begin{thm}\label{t1var} 
Let $(M,g)$ be simple. Let $T>\textrm{Diam}(M)$ and 
let $q_1$, $q_2\in  L^\infty((0,T)\times M)$. 
Then
\bel{Tt1d}\mathcal C(q_1, 0,T)= \mathcal C(q_2, 0,T)\ee
implies that $q_1 = q_2$.
\end{thm}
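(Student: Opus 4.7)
My plan is to follow the scheme of Theorem~\ref{t1} (Alessandrini-type integral identity, geometric optics solutions, inversion of the geodesic ray transform) while coping with the absence of $\partial_t u|_{\{T\}\times M}$ from the data by constructing one of the two geometric optics solutions so that it vanishes identically on $\{T\}\times M$; the hypothesis $T>\mathrm{Diam}(M)$ is exactly the time slack which makes this possible. Starting from any $u_1\in L^2(\M)$ with $(\Box_g+q_1)u_1=0$ and $u_1|_{t=0}=0$, the equality \eqref{Tt1d} yields $u_2$ with $(\Box_g+q_2)u_2=0$, $u_2|_{t=0}=0$, $u_1=u_2$ on $(0,T)\times\partial M$ and on $\{T\}\times M$, $\partial_\nu u_1=\partial_\nu u_2$ on $(0,T)\times\partial M$, and $\partial_t u_1|_{t=0}=\partial_t u_2|_{t=0}$. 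Pairing the equation for $w=u_1-u_2$ with an arbitrary test solution $v$ of $(\Box_g+q_2)v=0$ and integrating by parts on $\M$, the lateral and bottom contributions vanish by the Dirichlet--Neumann cancellations on $w$ together with $\partial_t w|_{t=0}=0$, and only $\int_M (\partial_t w)(T,\cdot)\,v(T,\cdot)\,dV_g$ remains. Hence, for any $v$ with $v|_{\{T\}\times M}=0$,
\begin{equation}\label{sketchkey}
\int_\M (q_1-q_2)\,u_1\,v\,dV_g\,dt=0.
\end{equation}

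I then construct $u_1$ and $v$ as standard geometric optics solutions. Embed $(M,g)$ into a slightly larger simple manifold $(\widetilde M,g)$; given a unit-speed geodesic $\gamma:[0,L]\to M$ with $L\leqslant\mathrm{Diam}(M)$ between two points of $\partial M$, choose a pole $y_0\in\widetilde M\setminus M$ on the continuation of $\gamma$. The polar distance $r(x)=d_g(y_0,x)$ is then smooth on a neighbourhood of $M$ with $|\nabla_g r|_g=1$, so the phase $\phi(t,x)=-t+r(x)$ solves the null eikonal equation for $\Box_g$. I look for
$$u_1=e^{i\sigma\phi}\,a_1+R_1,\qquad v=e^{-i\sigma\phi}\,a_2+R_2,\qquad \|R_j\|_{L^2(\M)}=O(\sigma^{-1}),$$
with the principal amplitudes $a_j$ determined by transport ODEs along the radial null characteristics issuing from $y_0$, and the remainders $R_j$ produced by a Carleman-type solvability argument; $u_1|_{t=0}=0$ is absorbed into $R_1$ in the usual way. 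The new ingredient is the condition $v|_{\{T\}\times M}=0$: I take $\supp a_2(0,\cdot)\subset M$ and impose $R_2|_{t=T}=0$ via a backward mixed problem. Because every radial geodesic issuing from $y_0$ has length at most $\mathrm{Diam}(M)<T$ inside $M$, the backward characteristic from any point $(T,x)$ with $x\in M$ reaches $t=0$ at a point $x_0\notin M$, so $a_2(T,\cdot)\equiv 0$ on $M$ automatically.

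Substituting into \eqref{sketchkey}, the phases in $u_1 v$ cancel and the limit $\sigma\to\infty$ annihilates all cross-terms involving $R_j$, leaving
$$\int_\M (q_1-q_2)(t,x)\,a_1(t,x)\,a_2(t,x)\,dV_g\,dt=0.$$
Localising $a_1,a_2$ around a single null characteristic $(t,\gamma(t-t_0))$ with $t_0\in[0,T-L]$ (a nonempty interval by $T>L$) produces the light-ray integral
$$\int_0^L(q_1-q_2)(t_0+s,\gamma(s))\,ds=0;$$
varying $\gamma$ over all geodesics between boundary points and taking the Fourier transform in the time-shift $t_0$ reduces the problem to the injectivity of the geodesic X-ray transform on the simple manifold $(M,g)$~\cite{DKSU}, which forces $q_1=q_2$. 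The main technical obstacle is the construction of a test solution $v$ with $v|_{\{T\}\times M}=0$ that still concentrates along an arbitrary geodesic segment of $M$: the null characteristics of $\Box_g$ are transverse to the top face, so the vanishing at $t=T$ cannot come from a simple cut-off of the amplitude, and the argument genuinely requires the time slack provided by $T>\mathrm{Diam}(M)$.
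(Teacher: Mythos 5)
Your integral identity is correct and is essentially the one the paper uses: since $\mathcal C(q,0,T)$ omits $\p_t u|_{t=T}$, the only surviving boundary term is $\int_M \p_t w(T,\cdot)\,v(T,\cdot)\,dV_g$, so the test solution must vanish on $\{T\}\times M$ while the other solution vanishes on $\{0\}\times M$. The gap is in how you achieve these two vanishing conditions. You arrange $v|_{t=T}=0$ by letting the support of the principal amplitude $a_2$ translate out of $M$ along the null characteristics $r-t=\mathrm{const}$; but then $a_2$ is nonzero on $(0,T)\times M$ only where the characteristic through $(t,x)$ meets neither cap, i.e. only on the complement of the obstruction region \eqref{lateral_obstruction}. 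Concretely, the light-ray integrals $\int_0^L q(t_0+s,\gamma(s))\,ds=0$ are obtained only for $t_0$ in the bounded interval $[0,T-L]$, and your final step fails there: $t_0\mapsto\int q(t_0+s,\gamma(s))\,ds$ is compactly supported, so its vanishing on an interval does not propagate by Fourier analysis to all of $\R$, and $q$ is left undetermined precisely on \eqref{lateral_obstruction}. In effect your construction does not use the top and bottom data at all. A second, related gap: "absorbing'' $u_1|_{t=0}=0$ into $R_1$ does not work as stated, because the principal term has $O(\sigma)$ time derivative at $t=0$, so a remainder correcting its trace would have $O(\sigma)$ energy and would not decay.

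The paper's proof keeps both principal amplitudes nonvanishing on all of $(0,T)\times M$ (they are $e^{-\mu(r+t)/2}\chi(r+t)h(\theta)b^{-1/4}$ with $\chi\equiv 1$ on a neighbourhood of $[0,\mathrm{Diam}(M_1)+T]$), and achieves the cap conditions by adding explicit reflected terms: $u_2=a_2(t)e^{i\sigma(\psi+t)}-a_2(-t)e^{i\sigma(\psi-t)}+R_{2,\sigma}$ vanishes at $t=0$, and $u_1=a_1(t)e^{-i\sigma(\psi+t)}-a_1(2T-t)e^{-i\sigma(\psi+(2T-t))}+R_{1,\sigma}$ vanishes at $t=T$. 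The main term $a_1(t)a_2(t)$ then yields the attenuated geodesic ray transform of the Laplace transform of $q$ over the \emph{whole} cylinder (no coverage loss), the mixed cross terms oscillate like $e^{\pm 2i\sigma t}$ and die by Riemann--Lebesgue, and the one non-oscillatory cross term $a_1(2T-t)a_2(-t)e^{-2i\sigma T}$ is killed by choosing $\supp\chi\subset(-\epsilon,\mathrm{Diam}(M_1)+T+\epsilon)$ with $2\epsilon<T-\mathrm{Diam}(M_1)$ — this is the only place where $T>\mathrm{Diam}(M)$ enters. To repair your argument you would need to replace the support-translation mechanism by this reflection mechanism (or something equivalent that keeps the product of amplitudes nonvanishing on all of $(0,T)\times M$).
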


In order to restrict the data also on the lateral part of the boundary, we make the  assumption that $(M,g)$ is contained in a conformal cylinder of the form (\ref{cylinder}), that is, we assume that it satisfies the geometric assumption introduced in \cite{DKSU} in the context the Calder\'on problem. Furthermore, we assume that 
also the time direction is multiplied by the same conformal factor, 
which amounts to assuming, after the gauge transformation discussed in Section \ref{sec_motivation}, that the wave equation has two Euclidean directions, one of them being the time direction. 

More precisely, we assume that 
$(M_0,g')$ is a simple Riemanian manifold of dimension $d-1\geq2$,
$M \subset \R \times int(M_0)$ is a compact domain with smooth boundary,
and that $g=a(e\oplus g')$ where $e$ is the euclidean metric on $\R$
and $a \in C^\infty(M)$ is positive,
and consider the wave operator
\begin{align}
\label{wave_eucl2}
\Box_{a,g}=a^{-1}\pd_t^2-\Delta_g.
\end{align}
Let us now describe the restriction of $\mathcal C_q$
considered in our second result. 
To every variable $x\in M$ we associate the coordinate $x_1\in\R$ and $x'\in M_{x_1}=\{x'\in M_0:\ (x_1,x')\in M\}$ such that $x=(x_1,x')$.
We define 
$\phi(x)=x_1$, 
$$
\pd M_\pm=\{x\in\pd M:\ \pm \pd_\nu\phi(x)\geq0\},
$$ 
and $\Sigma_\pm=(0,T)\times int(\pd M_\pm)$.
We consider $U=[0,T]\times U'$ (resp $V=(0,T)\times V'$) with $U'$ (resp $V'$) a closed  neighborhood of $\partial M_{+}$ (resp $\partial M_-$) in $\partial M$, and define the following restriction of $\mathcal C_q$,
\[\mathcal C_{q,*}=\{(u_{|U},\partial_t u_{|t=0},\partial_\nu u_{|V}, u_{|t=T}):\ u\in L^2(\M),\ (\Box_{a,g} +q)u=0,\ u_{|t=0}=0,\ \textrm{supp}u_{|(0,T)\times\pd M}\subset U\}.\]
Our second result is stated as follows.

\begin{thm}\label{thm1} 
Suppose that the leading part of the wave operator is of the form (\ref{wave_eucl2}). Let $T > 0$ and let 
$q_1,\ q_2 \in L^\infty((0,T)\times M)$. Then
\begin{equation}
\label{thm1a} 
\mathcal C_{q_1,*}=\mathcal C_{q_2,*}
\end{equation}
implies that $q_1=q_2$.
\end{thm}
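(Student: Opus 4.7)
The plan is to adapt the complex geometric optics (CGO) machinery developed in \cite{DKSU} for the Calder\'on problem on simple transversal manifolds to the present hyperbolic setting, and to combine it with a Bukhgeim--Uhlmann type partial-data Carleman estimate to handle the restricted data set $\mathcal C_{q,*}$. Since $g=a(e\oplus g')$ with $(M_0,g')$ simple of dimension $d-1\geq 2$, the coordinate $x_1$ is privileged: after the conformal-gauge change $v=a^{(d-2)/4}u$, the equation $(\Box_{a,g}+q)u=0$ becomes
\[ \pd_t^2 v-\Delta_{g_0}v+\widetilde q\,v=0, \qquad g_0=e\oplus g', \]
for a suitable $\widetilde q\in L^\infty(\M)$, so the principal part is the product $\pd_t^2-\pd_{x_1}^2-\Delta_{g'}$ and the function $\phi(x)=x_1$ is a limiting Carleman weight for the spatial part of the operator.

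For large real $s$ and real parameters $\tau,\rho$, I look for CGO solutions of the transformed equation of the form
\[ v^{(s,\tau,\rho)}_\pm(t,x_1,x')=e^{\pm sx_1+i(\tau t+\rho x_1)}\bigl(w(x')+r_{s,\tau,\rho}(t,x_1,x')\bigr). \]
Plugging in, the amplitude $w$ must satisfy, to leading order in $s$, a shifted Helmholtz-type equation on $(M_0,g')$ with complex spectral parameter, $[\Delta_{g'}+(s^2+\tau^2-\rho^2)\pm 2is\rho]\,w\approx \widetilde q\,w$. The simplicity of $(M_0,g')$ provides global smooth solutions of the associated eikonal equation, and hence Gaussian-beam quasimodes $w$ concentrated on any prescribed unit-speed geodesic $\gamma\subset M_0$. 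The remainder $r_{s,\tau,\rho}$ is absorbed via an $L^2$ solvability estimate dual to a Carleman estimate for the weight $\pm sx_1$ applied to $\pd_t^2-\Delta_{g_0}+\widetilde q$; reversing the gauge then yields CGO solutions $u=a^{-(d-2)/4}v$ of $(\Box_{a,g}+q)u=0$.

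To accommodate the partial data, I strengthen the above to a boundary Carleman estimate with the convexified weight $e^{\lambda x_1}$, producing boundary terms proportional to $\pd_\nu\phi=\nu_1$, positive on $\pd M_+$ and negative on $\pd M_-$. This allows the construction of (i) a CGO $u_1$ for the $q_1$-equation whose lateral trace is supported in $U$ and which vanishes at $t=0$, and (ii) a solution $u_2$ of the $q_2$-equation with the same observed data $\mathcal C_{q_2,*}=\mathcal C_{q_1,*}$, so that $w=u_1-u_2$ has vanishing Dirichlet on the whole lateral boundary, vanishing Cauchy at $t=0$, vanishing Dirichlet at $t=T$, and vanishing Neumann on $V$. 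Pairing $w$ with a second CGO solution $\widetilde u$ of the $q_2$-equation whose lateral trace is supported in $V'$ and which vanishes at $t=T$, integration by parts produces the orthogonality identity
\[ \int_\M (q_1-q_2)\,u_1\,\widetilde u\,dV_g=0 \]
for a rich family of CGO pairs.

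Substituting $u_1=a^{-(d-2)/4}v^{(s,\tau_1,\rho_1)}_-$ and $\widetilde u=a^{-(d-2)/4}v^{(s,\tau_2,\rho_2)}_+$, the real $s$-exponentials cancel; as $s\to\infty$ the remainders vanish in $L^2$ and the Gaussian-beam product $w_1w_2$ concentrates on the common geodesic $\gamma\subset M_0$, so the identity collapses to
\[ \int_\R\int_\R\int_\gamma (q_1-q_2)(t,x_1,\gamma(\sigma))\,e^{i(\alpha t+\beta x_1)}\,d\sigma\,dx_1\,dt=0,\qquad \alpha=\tau_1+\tau_2,\ \beta=\rho_1+\rho_2, \]
where $q_1-q_2$ is extended by zero outside $\M$. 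Since $(\alpha,\beta)$ may be taken arbitrary in $\R^2$ and $\gamma$ arbitrary, Fourier inversion in $(t,x_1)$ combined with the injectivity of the geodesic ray transform on the simple manifold $(M_0,g')$ forces $q_1=q_2$. The main obstacle is the partial-data step: the CGO ansatz does not automatically satisfy $u|_{t=0}=0$, $\supp u|_{(0,T)\times\pd M}\subset U$, nor the matching required for $\widetilde u$ to vanish on the unobserved boundary portions, so correctors must be added whose boundary traces are controlled by the boundary Carleman estimate and whose contribution to the integral identity is $o(1)$ as $s\to\infty$, all while preserving the explicit phase structure in the leading amplitudes.
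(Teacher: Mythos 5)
Your overall architecture (conformal gauge reduction to $e\oplus g'$, CGO solutions exploiting the distinguished Euclidean direction $x_1$, a partial-data Carleman estimate, and reduction to the geodesic ray transform on $(M_0,g')$ combined with Fourier analysis in $(t,x_1)$) matches the paper's. But there is a genuine gap at the heart of the argument: you take the Carleman weight to be the purely spatial function $\phi(x)=x_1$ (convexified to $e^{\lambda x_1}$), so your CGOs carry the real exponential $e^{\pm sx_1}$ and only bounded-frequency oscillation $e^{i\tau t}$ in time. For the wave operator on the finite cylinder $(0,T)\times M$ no usable Carleman estimate holds with a time-independent weight. Concretely, writing $P_{\pm\sigma}$ for the conjugated operator and $P_2(\pd_t,\pd_{x_1})=2(\beta\pd_t-\pd_{x_1})$ for its first-order part, the interior positivity $\sigma^2\norm{v}_{L^2}^2$ in the paper's estimate \eqref{tc2} comes from the identity $\int_0^\tau\int_M (P_2(\pd_t,\pd_{x_1})v)\,v\,dV_g\,dt=\beta\int_M|v(\tau,\cdot)|^2dV_g$ (see \eqref{caca}), and the control of the Cauchy data at $t=T$ comes from the term $2\beta\sigma\int_M(|\pd_t v(T)|^2+|\nabla_g v(T)|_g^2)dV_g$, which must dominate the signless cross term $-4\sigma\int_M\pd_tv(T)\pd_{x_1}v(T)dV_g$; both require $\beta$ bounded away from $0$ (the paper takes $\beta\in[1/2,1]$). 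With a time-independent weight ($\beta=0$) both quantities vanish identically, so your estimate yields neither the $O(\sigma^{-1})$ solvability bound for the remainders nor any $\sigma$-gain at $t=0,T$. The latter is fatal for the partial-data step: the unobserved boundary contribution $\int_M \pd_t u(T,\cdot)\,u_1(T,\cdot)\,dV_g$ (equivalently, in your formulation, the corrector needed to force $\widetilde u(T,\cdot)=0$) is small only because the weight $e^{-\sigma(\beta T+x_1)}$ at $t=T$ is matched against the Carleman boundary terms in \eqref{c1a}; a spatial weight provides no such smallness.

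The repair is exactly the paper's device: use the space-time weight $\beta t+x_1$ with $\beta\in[1/2,1]$, which is (nearly) characteristic for $\Box_{e\oplus g'}$, and transfer the surplus $\sigma\sqrt{1-\beta^2}$ into a large transversal oscillation $e^{\pm i\sigma\sqrt{1-\beta^2}\,r}$ along geodesics of the transversal manifold inside the amplitude (the paper uses polar normal coordinates and an arbitrary density $h(\theta)$ rather than Gaussian beams, a cosmetic difference). Two secondary points. First, the paper constructs only one boundary-adapted solution (the exponentially growing one, obtained by Hahn--Banach duality from \eqref{c1a}) and absorbs the remaining unobserved boundary terms by applying \eqref{c1a} to the difference of solutions; this is lighter than your plan of building two boundary-adapted CGOs, one supported in $U$ vanishing at $t=0$ and one supported in $V$ vanishing at $t=T$. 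Second, the admissible Fourier frequencies in $(t,x_1)$ produced this way form only the cone $\{(\mu,\beta\mu):\ \mu\in\R,\ \beta\in[1/2,1]\}$, and the paper must invoke analyticity of the Fourier transform of the compactly supported ray transform to conclude; your claim that the frequency pair $(\tau_1+\tau_2,\rho_1+\rho_2)$ sweeps all of $\R^2$ would need to be checked against the compatibility constraints linking the phases to the quasimode equation, and an analyticity argument of the same kind is the safe fallback.
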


\subsection{Remarks about the proofs of the main results}

As indicated above, the proofs of Theorems \ref{t1} and \ref{thm1}
are based on the use of geometric optics solutions. In the case of the former, we use the oscillating solutions of the form 
\bel{osc}u(t,x)=\sum_{j=1}^ka_j(t,x)e^{i\sigma \psi_j(t,x)}+R_\sigma(t,x),\quad (t,x)\in(0,T)\times M,\ee
with $\sigma\in\R$ a  parameter, $R_\sigma$ a term that admits a decay with respect to the parameter $|\sigma|$ and $\psi_j$, $j=1,..,k$,  real valued. Inspired by the elliptic result \cite{DKSU}, we use these solutions to prove that the hyperbolic inverse boundary value problem reduces to the problem to invert a weighted geodesic ray transform on $(M,g)$.
The assumption that $(M,g)$ is simple guarantees that this transform is indeed invertible.

For our purposes it is enough to take $k=2$ in (\ref{osc}), and in the case of full data $\mathcal C_q$ already $k=1$ is enough.
In the case of data sets $\mathcal C(q, 0)$ and $\mathcal C(q, 0,T)$,
the second term is needed in order to be able to restrict the data while  avoiding a "reflection". 
Similar construction is likely to work also on the lateral boundary,
and one may hope that this could be used to reduce the amount of lateral data. In fact, this type of argument was used in the elliptic case in \cite{KS}. There it was assumed that the part of the lateral boundary lacking data, that is, the inaccessible part, 
satisfies a (conformal) flatness condition in one direction,
and the elliptic inverse boundary value problem was reduced to the invertibility of a broken geodesic ray transform. The geodesics used in the transform break via the normal reflection when they hit the inaccessible part of the boundary. 
However, barring
some special cases, it is not known if such a transform is invertible,
and moreover, there are also counter-examples to invertibility in
general. We refer to \cite{Il} for a discussion of both positive results
and counter-examples, and do not pursue a lateral reflection type argument in the present paper. 

We recall that several authors, e.g. \cite{A,RR, RS,S}, have 
treated the problem to determine a time-dependent potential in a wave equation. In these results $(M,g)$ is a domain with the Euclidean geometry, and the proofs are based on the use of solutions of the form (\ref{osc}) to reduce the inverse boundary value problem to the problem to invert the light-ray transform in the Minkowski space. An analogous reduction is possible also in the case of more complicated geometry \cite{W}, however, 
it is an open question if the light-ray transform is invertible on a Lorentzian manifold of the product form $((0,T) \times M, dt^2 - g)$
where $(M,g)$ is simple.  
We remark that in the case of a real-analytic Lorentzian manifold satisfying a certain convexity condition, the invertibility is shown in the recent preprint \cite{S_lr}. 
We do not pursue this direction in the present paper, as having (restricted) data on the top and bottom allows for a reduction to the well-understood problem to invert a weighted geodesic ray transform,  
rather than the light-ray transform.

For Theorem \ref{thm1}, inspired by  \cite{BJY,Ki2,Ki3,Ki4}, we replace the oscillating solutions (\ref{osc})
by exponentially growing or decaying solutions of the form
\bel{exp}u(t,x)=e^{\sigma (\beta t+\phi(x))}(a_\sigma(t,x)+R_\sigma(t,x)),\quad (t,x)\in(0,T)\times M,\ee
with $\sigma\in\R$ a  parameter, $\beta\in(0,1]$, $R_\sigma$ a term  that  admits a decay with respect to the parameter $|\sigma|$ and $ \phi$ a limiting Carleman weight for elliptic equations as defined in \cite{DKSU}. 
The geometric assumption in Theorem \ref{thm1}
guarantees that the "phase" function $\beta t+\phi(x)$ 
is characteristic in the sense that its gradient is in the characteristic set of the wave operator, and this allows us to construct solutions of the form (\ref{exp}). The construction is based on a Carleman estimate with the above characteristic phase as the Carleman weight. 
The results \cite{BU,DKSU,KSU} can be viewed as elliptic analogies of such a construction.

Let us also emphasize that   we propose a construction of  geometric optics solutions taking into account both the low regularity of the time-dependent potential and the geometrical constraints.

\subsection{Outline} 
This paper is organized as follows. Section 2 is devoted to the proof Theorem \ref{t1} whereas Sections 3, 4, 5 and 6 are concerned with Theorem \ref{thm1}. 
In Section 3 we give some preliminary results on solving the direct problem in $L^2(\M)$, that is, with smoothness below the natural energy level. 
This is needed for certain duality arguments in the proof of Theorem \ref{thm1}.
In Section 4 we prove a Carleman estimate with the characteristic weight function, and in Section 5 we build exponentially growing and decaying geometric optics solutions designed in accordance with the estimate. Finally combining the results of Sections 4 and 5, we complete the proof of Theorem \ref{thm1} in Section 6.

\section{The problem with full data on the lateral boundary}

We begin by writing the Cauchy data set $\mathcal C_q$ as a graph.
For the purposes of Theorem \ref{t1} it is enough to consider energy class solutions, that is, solutions in 
$$
\mathcal H = C^1([0,T]; L^2(M))\cap C([0,T];H^1(M)).
$$
From the point of view of uniqueness results such as Theorems \ref{t1} and \ref{thm1},
the restriction of $\mathcal C_q$ to the energy class, that is,
$$
\widetilde {\mathcal C_q} = 
\{(u_{|\p \M},\partial_{\overline \nu}u_{|\p \M}):\  u\in \mathcal H,\ \Box_gu+qu=0\},
$$
makes no difference, since it can be shown that $\widetilde {\mathcal C_q}$
is dense in $\mathcal C_q$, say, in the sense of distributions, and whence $\widetilde {\mathcal C_q}$ determines $\mathcal C_q$.

For $T>0$ and $q\in L^\infty((0,T)\times M)$ we consider the initial boundary value problem 
\bel{eeq1}
\left\{ \begin{array}{ll}  \pd_t^2 u-\Delta_{g} u + q u  =  0, & \mbox{in}\ (0,T)\times M,\\  
u  =  f, & \mbox{on}\ (0,T)\times \partial M,\\  
 u(0,\cdot)  =v_0,\quad  \pd_tu(0,\cdot)  =v_1 & \mbox{in}\ M,
\end{array} \right.
\ee
with non-homogeneous Dirichlet data $f$ and initial conditions $v_0,v_1$,
and define the boundary operator
\[B_{q}:(f,v_0,v_1)\mapsto (\partial_\nu u_{|(0,T)\times\partial M},u_{|t=T},\pd_tu_{|t=T}),\]
where $u$ solves problem \eqref{eeq1}.
It follows from \cite{LLT} that 
$B_q$ is continuous from the space of functions 
$$
(f,v_0,v_1) \in H^1((0,T)\times\pd M) \times H^1(M) \times L^2(M),
$$
satisfying the compatibility condition $f=v_0$ in $\{0\} \times \p M$,
to the space 
$$L^2((0,T)\times\pd M) \times H^1(M) \times L^2(M).$$
The Cauchy data set $\widetilde{\mathcal C_q}$ is the graph of $B_q$.

In order to highlight the main ideas of the proof of Theorem \ref{t1},
we consider first the full data case, that is, we begin by proving the following theorem: 
\begin{thm}\label{t1full} 
Suppose that $(M,g)$ is a simple. Let $T > 0$ and 
let $q_1$, $q_2\in  L^\infty((0,T)\times M)$. 
Then $B_{q_1} = B_{q_2}$
implies that $q_1 = q_2$.
\end{thm}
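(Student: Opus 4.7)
The plan is to combine a standard integration by parts identity with a WKB geometric optics construction adapted to the simple structure of $(M,g)$, and finish via the injectivity of a weighted geodesic ray transform on $(M,g)$; a single oscillating phase ($k=1$ in \eqref{osc}) suffices in the full data setting. From $B_{q_1}=B_{q_2}$ and the equality of initial and lateral Dirichlet data imposed on $u_1,u_2$, the difference $w=u_1-u_2$ has vanishing full Cauchy data on every face of $\partial\M$; since $(\Box_g+q_1)w=(q_2-q_1)u_2$, pairing with any finite energy solution $v$ of $(\Box_g+q_1)v=0$ and integrating by parts twice yields
\[
\int_\M (q_2-q_1)\, u_2\, v\, dV_g\, dt=0,
\]
and it remains to saturate this identity with a rich family of pairs $(u_2,v)$ solving $(\Box_g+q_2)u_2=0$ and $(\Box_g+q_1)v=0$ respectively.

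\textbf{Geometric optics construction.} Extend $(M,g)$ to a slightly larger open simple manifold $(\widetilde M,g)$, fix $y\in\widetilde M\setminus M$, and set $\varphi(x)=d_g(y,x)$: this is smooth on $M$ with $|\nabla_g\varphi|\equiv 1$, and in geodesic polar coordinates $(r,\theta)$ around $y$ the metric reads $g=dr^2+g_\theta(r,\theta)$. The phase $\psi(t,x)=t-\varphi(x)$ is Lorentzian-null for $dt^2-g$, so for the ansatz $a\, e^{i\sigma\psi}+R_\sigma$ the $\sigma^2$ contribution is killed by the eikonal and the $\sigma^1$ contribution by the transport equation $(\partial_t+\nabla_g\varphi\cdot\nabla_g)a+\tfrac12(\Delta_g\varphi)\, a=0$; in polar coordinates the substitution $a=|g_\theta|^{-1/4}b$ reduces this to $(\partial_t+\partial_r)b=0$, so $a$ is parameterized by any smooth compactly supported $b_0(t-r,\theta)$ as $a=|g_\theta|^{-1/4}b_0(t-r,\theta)$. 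I use this to build
\[
u_2=a_2 e^{i\sigma\psi}+R_\sigma^{(2)},\qquad v=a_v e^{-i\sigma\psi}+R_\sigma^{(v)},
\]
with $a_2,a_v$ of the above form for arbitrary $b_2,b_v$, and after adding a first-order amplitude correction of size $\sigma^{-1}$ and solving the remainder equations with homogeneous Cauchy data on one time face, a standard $L^2$ energy estimate gives the key decay $\|R_\sigma^{(2)}\|_{L^2(\M)}+\|R_\sigma^{(v)}\|_{L^2(\M)}=O(\sigma^{-1})$.

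\textbf{Reduction and inversion.} Substituting in the integral identity, the exponentials in $u_2 v$ cancel at leading order, cross and remainder contributions are $O(\sigma^{-1})$, and the factor $|g_\theta|^{-1/2}$ from $a_2 a_v$ absorbs the polar volume element. Passing to $\sigma\to\infty$ and then freeing $H=b_2 b_v$, the substitution $\tau=t-r$ yields the lightlike line identity
\[
\int(q_2-q_1)\bigl(\tau+r,\exp_y(r\theta)\bigr)\, dr=0\quad\text{for a.e. }(\tau,\theta),
\]
for every admissible exterior base point $y$. Extending $q_2-q_1$ by zero outside $(0,T)\times M$ and Fourier transforming in $\tau$ converts this into
\[
\int e^{i\lambda r}\, \widetilde{(q_2-q_1)}(\lambda,\exp_y(r\theta))\, dr=0,\qquad \lambda\in\R,
\]
which, for each fixed $\lambda$, is precisely the attenuated geodesic ray transform of the spatial function $\widetilde{(q_2-q_1)}(\lambda,\cdot)$ with constant imaginary attenuation $-i\lambda$ on the simple manifold $(M,g)$; its injectivity forces $\widetilde{(q_2-q_1)}(\lambda,\cdot)=0$ for every $\lambda$, and Fourier inversion in $t$ gives $q_1=q_2$.

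The main obstacle I expect is the $O(\sigma^{-1})$ decay of the WKB remainders in $L^2(\M)$: because $q_j\in L^\infty$ only, this decay cannot be read off directly from an energy estimate with an $O(1)$ source, and one must match the first-order amplitude correction against the choice of Cauchy data for $R_\sigma$ so that the source in the remainder equation itself is $O(\sigma^{-1})$; once this decay is in hand, the rest of the argument reduces to standard simple-manifold geometry and a classical injectivity result for attenuated ray transforms.
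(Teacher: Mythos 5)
Your overall architecture---geometric optics with a null phase $t\pm\mathrm{dist}(y,x)$ built on an extension of the simple manifold, the integration-by-parts identity $\int_{\M}(q_2-q_1)u_1u_2\,dV_g\,dt=0$, and a reduction to a geodesic ray transform---is the same as the paper's, but there are two genuine gaps. The first is the one you flag yourself: for $q\in L^\infty$ the $O(\sigma^{-1})$ remainder bound cannot be obtained by a first-order amplitude correction, since that corrector solves a transport equation with the merely bounded source $qa$ and must then be hit again by $\Box_g$, which it cannot absorb. The paper's Lemma \ref{Ll1} shows that no rate is needed: writing the remainder equation with source $e^{i\sigma t}F_\sigma$, integrating in time (setting $w_\sigma=\int_0^tR_\sigma\,d\tau$) and running a Gronwall argument reduces everything to $\sup_{t}\|\int_0^te^{i\sigma s}F_\sigma\,ds\|_{L^2(M)}\to0$, which follows from the Riemann--Lebesgue lemma and dominated convergence. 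This $o(1)$ decay is all that is used when passing to the limit in the bilinear identity, so you should drop the corrector and the $O(\sigma^{-1})$ claim entirely.

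The second gap is more serious. Your amplitudes $|g_\theta|^{-1/4}b_0(t-r,\theta)$ lead, after freeing $H=b_2b_v$ and Fourier transforming in $\tau=t-r$, to the vanishing of the attenuated geodesic ray transform of $\hat q(\lambda,\cdot)$ with constant purely imaginary attenuation $e^{i\lambda r}$ for \emph{every} $\lambda\in\R$; your ``lightlike line identity'' is precisely the light ray transform of $q$ on $((0,T)\times M,\,dt^2-g)$. Injectivity of this transform for all $\lambda$ is not a classical result: the paper states explicitly that invertibility of the light ray transform on such products with $(M,g)$ simple is an open question. The paper sidesteps this by inserting the real decaying factor $e^{-\mu(r+t)/2}$ into each amplitude (still a solution of the transport equation, being a function of $r+t$), so that the limiting identity becomes $I_\mu\mathcal L_\mu q=0$ with a small \emph{real} attenuation $\mu>0$; injectivity of $I_\mu$ for small $\mu$ is known (Section 7 of \cite{DKSU}, after the smoothness upgrade of \cite[Proposition 4.1]{FSU}), and holomorphy of $z\mapsto\mathcal L_zq$ on the right half-plane then yields $q=0$. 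Your version can be repaired in the same spirit---use injectivity of the attenuated transform only for $|\lambda|$ small and then the analyticity of $\lambda\mapsto\hat q(\lambda,\cdot)$, which holds because $q$ is supported in $[0,T]$ in time---but as written the final step invokes an injectivity statement that is not available.
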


\subsection{Geometric optics solutions }

The goal of this subsection is to construct energy class solutions to the wave equation of the form
\
\bel{gGO1} u(t,x)= a(t,x)e^{i\sigma(\psi(x)+ t)}+R_\sigma(t,x), \quad |\sigma| > 1,
\ee
with the remainder term $R_\sigma\in \mathcal H$ satisfying
\begin{align}
\label{gGO}
&R_\sigma=0 \textrm{ on }(0,T)\times\pd M,\quad R_\sigma(s,\cdot)=\pd_tR_\sigma(s,\cdot)=0 \textrm{ on } M,
\\\label{gGO2}
&\lim_{|\sigma|\to+\infty} \norm{R_\sigma}_{L^2((0,T)\times M)}=0,
\end{align}
where either $s=0$ or $s=T$.

In order to get the decay \eqref{gGO2}, we choose $\psi$ and $a$ so that they satisfy the following eikonal and transport equations
\bel{psi}\sum_{i,j=1}^dg^{ij}(x)\pd_{x_i}\psi\pd_{x_j}\psi=|\nabla_g\psi|^2_g=1,\ee
\bel{a}  2i\pd_ta-2i\sum_{i,j=1}^dg^{ij}(x)\pd_{x_i}\psi\pd_{x_j}a-i(\Delta_g \psi)a=0.\ee
As $(M,g)$ is assumed to be simple, the eikonal equation can be solved globally on $M$.
To see this, first extend the simple manifold $(M,g)$ into a simple manifold $(M_1,g)$ in such a way that $M$ is contained into the interior of $M_1$. 
Now pick $y\in \pd M_1$ and consider the polar normal coordinates $(r,\theta)$ on $M_1$ given by $x=\exp_y(r\theta)$ where $r>0$ and $\theta\in S_y(M_1):=\{v\in T_yM_1:\ |v|_g=1\}$.  According to the Gauss lemma (see e.g. \cite[Chaper 9, Lemma 15]{Sp}), in these coordinates the metric takes the form $g(r,\theta)=dr^2+g_0(r,\theta)$ with $g_0(r,\theta)$ a metric on $S_yM_1$ that depends smoothly on $r$. 
We choose 
\begin{align}
\label{psi1}
\psi(x)=\textrm{dist}(y,x), \quad x \in M,
\end{align}
with $\textrm{dist}$ the Riemanian distance function on $(M_1,g)$. 
As $\psi$ is given by $r$ in the polar normal coordinates,  one can easily check that $\psi$ solves \eqref{psi}. 

\
Let us now turn to the transport equation.
We write $a(t,r,\theta)=a(t,\exp_y(r\theta))$
and use this notation to indicate the representation in the polar normal coordinates also for other functions.
Moreover, we define $b(r,\theta)=\textrm{det}g_0(r,\theta)$,
and transform \eqref{a} into

\[ 
\pd_ta-\pd_ra-\left({\pd_rb \over 4b}\right)a=0.\]

We see that for any $h\in C^\infty( S_yM_1)$, $\chi\in C^\infty(\R)$ and $\mu>0$
the function 
\bel{a1} a(t,r,\theta)=e^{-{\mu (r+t)\over2}}\chi(r+t)h(\theta)b(r,\theta)^{-1/4}\ee 
is a solution of the transport equation.

We are now ready for the construction of the remainder term. 
\begin{lem}\label{Ll1} Let $q\in  L^\infty((0,T)\times M)$.
Choose $\psi$ and $a$ by \eqref{psi1} and \eqref{a1} respectively.
Then there exists a solution $u \in \mathcal H$ of $\pd_t^2u-\Delta_gu+qu=0$ of the form \eqref{gGO1}
 where the remainder term $R_\sigma$ satisfies \eqref{gGO}-\eqref{gGO2}
and $\pd_\nu R_\sigma \in L^2((0,T) \times \pd M)$. 
 \end{lem}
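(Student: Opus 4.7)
The plan is to insert the WKB ansatz (\ref{gGO1}) into $\Box_g u + qu = 0$, use the eikonal equation (\ref{psi}) and the transport equation (\ref{a}) to cancel the contributions of order $\sigma^2$ and $\sigma$, and then build $R_\sigma$ as the solution of an initial boundary value problem with homogeneous lateral and Cauchy data. The decay (\ref{gGO2}) is then extracted via a duality argument that exploits the residual oscillatory source.

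A direct calculation shows that $(\Box_g+q)(ae^{i\sigma(\psi+t)}) = Ge^{i\sigma(\psi+t)}$ with $G := \partial_t^2 a - \Delta_g a + qa \in L^\infty(\M)$ bounded uniformly in $\sigma$, once (\ref{psi}) kills the $\sigma^2$ contribution and (\ref{a}) kills the $\sigma$ contribution. Thus $R_\sigma$ must satisfy $(\Box_g+q)R_\sigma = -Ge^{i\sigma(\psi+t)}$ with $R_\sigma = 0$ on $(0,T)\times\partial M$ and $R_\sigma(s,\cdot)=\partial_t R_\sigma(s,\cdot)=0$. The well-posedness theory of \cite{LLT} for the wave equation with an $L^1(0,T;L^2(M))$ source and homogeneous boundary and Cauchy data, applied forward in time if $s=0$ and backward if $s=T$, produces a unique solution $R_\sigma\in\mathcal H$ bounded uniformly in $\sigma$, together with the hidden-regularity conclusion $\partial_\nu R_\sigma\in L^2((0,T)\times\partial M)$.

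To obtain the $L^2$-decay, I would use duality. For $h\in L^2(\M)$, let $w\in\mathcal H$ solve $(\Box_g+q)w=h$ with vanishing lateral trace and vanishing Cauchy data at the endpoint of $\{0,T\}$ opposite to $s$. Green's identity makes all boundary terms vanish, yielding $\int_\M R_\sigma\, h = -\int_\M Gw\, e^{i\sigma(\psi+t)}$. A single integration by parts in $t$, exploiting $\partial_t e^{i\sigma(\psi+t)}=i\sigma e^{i\sigma(\psi+t)}$, then produces a prefactor $1/\sigma$ multiplying a volume contribution and a single boundary contribution at time $s$, both of which are controlled by $\|w\|_{\mathcal H}+\|w(s,\cdot)\|_{L^2(M)}$ (the boundary contribution at the opposite endpoint vanishes by the Cauchy conditions imposed on $w$). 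Combining with the energy inequality $\|w\|_{\mathcal H}\leq C\|h\|_{L^2(\M)}$ and taking the supremum over unit $h$ yields $\|R_\sigma\|_{L^2(\M)} = O(|\sigma|^{-1})$, which is stronger than (\ref{gGO2}).

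The subtle point is justifying the integration by parts despite the mere $\mathcal H$-regularity of $w$ and the $L^\infty$-regularity of $G$ and $q$, so that $\partial_t(Gw)$ exists only as a distribution. I would handle this by density, carrying out the computation first for smooth $h$, $q$, and $a$ where every step is classical, and then passing to the limit using continuity of the forward and backward solution operators to the space $\mathcal H$.
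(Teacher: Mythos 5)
Your first two steps coincide with the paper's: the eikonal equation \eqref{psi} kills the $\sigma^2$ term, the transport equation \eqref{a} kills the $\sigma$ term, and $R_\sigma$ is defined as the solution of the initial boundary value problem with source $-e^{i\sigma(\psi+t)}(\p_t^2-\Delta_g+q)a$ and homogeneous data, with $\mathcal H$-regularity and the hidden regularity $\p_\nu R_\sigma\in L^2((0,T)\times\p M)$ supplied by \cite{LLT}. The gap is in your third step. The integration by parts in $t$ that is supposed to produce the factor $\sigma^{-1}$ generates the term $\partial_t(Gw)$ with $G=(\p_t^2-\Delta_g+q)a$, hence the distribution $(\p_t q)\,a\,w$; for $q\in L^\infty$ this has no meaning as an integrable function, and --- more importantly --- the constant in your $O(\sigma^{-1})$ bound is controlled by a norm of $\p_t q$. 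Regularizing $q$ by $q_\epsilon$ makes that constant blow up as $\epsilon\to 0$, and passing to the limit in $\epsilon$ for fixed $\sigma$ only transfers the estimate $\norm{R_\sigma^\epsilon}\leq C_\epsilon\sigma^{-1}$ with $C_\epsilon\to\infty$; it does not yield a uniform rate. In fact the claimed rate is not merely unproved but false in general: the leading contribution to $R_\sigma$ is governed by oscillatory integrals of the form $\int_0^t e^{i\sigma s}q(s,x)a(s,x)\,ds$, which for a general bounded $q$ tend to zero with no rate whatsoever (Riemann--Lebesgue gives no modulus of decay on $L^1$). This is exactly why the statement only asserts the rate-free limit \eqref{gGO2}.

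The paper avoids this by never differentiating the source in $t$. It sets $w_\sigma(t,x)=\int_0^t R_\sigma(\tau,x)\,d\tau$, which solves a wave equation whose right-hand side involves the \emph{time-integrated} source $\int_0^t e^{i\sigma s}F_\sigma(s,\cdot)\,ds$; a standard energy identity plus a Gr\"onwall argument then bounds $\norm{R_\sigma(t,\cdot)}_{L^2(M)}$ by $\eta_\sigma=\sup_t\|\int_0^t e^{i\sigma s}F_\sigma(s,\cdot)ds\|_{L^2(M)}$, and $\eta_\sigma\to 0$ follows from Riemann--Lebesgue together with dominated convergence and an equicontinuity bound in $t$. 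Your duality framework could be repaired to reach the same conclusion --- split $G$ into the smooth part $(\p_t^2-\Delta_g)a$, where the integration by parts is legitimate and gives $O(\sigma^{-1})$, and the rough part $qa$, where one uses that $qae^{i\sigma(\psi+t)}\rightharpoonup 0$ weakly in $L^2(\M)$ together with compactness of the map $h\mapsto w_h$ from $L^2(\M)$ into $L^2(\M)$ to get uniformity over the unit ball of $h$ --- but as written the argument establishes neither the stated decay for $L^\infty$ potentials nor the stronger rate you claim.
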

\begin{proof}
Without loss of generality we assume that $s=0$ in \eqref{gGO} and that $\sigma > 0$.
By (\ref{psi}) and (\ref{a}),

\[(\partial_t^2-\Delta_g+q)\left(a(t,x)e^{i\sigma(\psi(x)+ t)}\right)={e^{i\sigma(\psi(x) + t)}(\partial_t^2-\Delta_g+q)a}.\]
We define $F_\sigma = e^{i\sigma \psi(x)}(\partial_t^2-\Delta_g+q)a$ and see that the remainder term must satisfy 
\bel{eq2_R}
\left\{ \begin{array}{ll}  \pd_t^2 R-\Delta_{g} R + q R  =  -e^{i\sigma t}F_\sigma, & \mbox{in}\ (0,T)\times M,\\  
R  =  0, & \mbox{on}\ (0,T)\times \partial M,\\  
 R(0,\cdot)  =0,\quad  \pd_tR(0,\cdot)  =0 & \mbox{in}\ M.
\end{array} \right.
\ee
As $F_\sigma\in L^2((0,T)\times M)$ we deduce from \cite[Theorem 2.1]{LLT} that (\ref{eq2_R}) 
admits a unique solution $R_\sigma\in \mathcal H$ satisfying $\pd_\nu R_\sigma\in L^2((0,T)\times \pd M)$ and the energy estimate
$$\norm{R_\sigma}_{\mathcal H}
+\norm{\pd_\nu R_\sigma}_{L^2((0,T)\times \pd M)} \leq C\norm{F_\sigma}_{L^2((0,T)\times M)}\leq C$$
with $C$ a constant independent of $\sigma$. In order to complete the proof, we need to verify that $R_\sigma$ fulfills  \eqref{gGO2}.
For this purpose, we define  
$w_\sigma(t,x)=\int_0^t R_\sigma(\tau,x) d\tau$ 
and observe that $w = w_\sigma$ solves
\bel{eq2_w}
\left\{ \begin{array}{ll}  \pd_t^2 w-\Delta_{g} w  = -H , & \mbox{in}\ (0,T)\times M,\\  
w  =  0, & \mbox{on}\ (0,T)\times \partial M,\\  
 w(0,\cdot)  =0,\quad  \pd_tw(0,\cdot)  =0 & \mbox{in}\ M,
\end{array} \right.
\ee
where $H(t,x) = \int_0^te^{i\sigma s}F_\sigma(s,\cdot)ds + \int_0^tq(s,x)R_\sigma(s,x)ds$.
As both $H$ and $\pd_t H$ are in $L^2((0,T)\times M)$,
we deduce from \cite[Theorem 2.1, Chapter 5]{LM2} that $w \in H^2((0,T)\times M)$. Multiplying equation \eqref{eq2_w} by $\pd_tw$ and integrating 
in $x\in M$  and $s\in(0,t)$ we get
\begin{align*}
\int_0^t\int_M \left(\pd_tw\pd_t^2w-\pd_tw\Delta_gw \right) dV_g ds
=-\int_0^t\int_M H \pd_tw\, dV_g ds,
\end{align*}
where $dV_g$ is the Riemannian volume measure on $(M,g)$.
We define 
$$
\eta_\sigma=\sup_{t\in[0,T]}\norm{\int_0^te^{i\sigma s}F_\sigma(s,\cdot)ds}_{L^2(M)},
$$ 
and obtain, after integration by parts in $x\in M$,
\begin{align}
\label{ll1a}
&\norm{\pd_tw(t,\cdot)}_{L^2(M)}^2+\|\nabla_g w(t,\cdot)\|_{L^2(M)}^2
\leq 
\eta_\sigma\int_0^t\norm{\pd_tw(s,\cdot)}_{L^2(M)}ds
\\\notag&\quad
+ \norm{q}_{L^\infty((0,T) \times M)} \int_0^t\int_0^s\norm{\pd_tw(s,\cdot)}_{L^2(M)} \norm{\pd_tw(\tau,\cdot)}_{L^2(M)} d\tau ds.
\end{align}
Moreover, using the shorthand notation $\zeta(t) = \norm{\pd_tw(t,\cdot)}_{L^2(M)}$, 
\begin{align}
\label{ll1b}
\int_0^t\int_0^s \zeta(s) \zeta(\tau) d\tau ds
&=\int_0^t \zeta(\tau) \int_\tau^t \zeta(s) dsd\tau
= - \frac 1 2 \int_0^t \pd_\tau \left( \int_\tau^t \zeta(s) ds \right)^2 d\tau
\\\notag&
= \frac 1 2 \left( \int_0^t \zeta(s) ds \right)^2.
\end{align}
According to \eqref{ll1a}-\eqref{ll1b}, there is a constant $C > 0$ such that
\begin{align*}
\zeta^2(t) \le \eta_\sigma \int_0^t \zeta(s) ds 
+ C \left( \int_0^t \zeta(s) ds \right)^2
\le \eta_\sigma^2 + (C + 1) \left( \int_0^t \zeta(s) ds \right)^2.
\end{align*} 
Thus, there is a constant $C > 0$ such that
$$
\zeta(t) 
\leq \sqrt{2}\eta_\sigma + C \int_0^t \zeta(s) ds,
$$
and an application of the Gr\"onwall lemma yields 
$$\norm{R_\sigma(t,\cdot)}_{L^2(M)}=\norm{\partial_tw(t,\cdot)} = \zeta(t) \leq \sqrt{2}\eta_\sigma e^{C t},\quad t\in(0,T).$$

It remains to show that $\eta_\sigma \to 0$ as $\sigma \to +\infty$.
According to the Riemann-Lebesgue lemma, for all $t\in(0,T)$ and almost every $x\in M$, we have
$$\lim_{\sigma\to+\infty}\int_0^te^{i\sigma s}F_\sigma(s,x)ds=0.$$
Moreover, by the definition of $F_\sigma$,
$$\abs{\int_0^te^{i\sigma s}F_\sigma(s,x)ds}\leq \int_0^t|(\partial_t^2-\Delta_g+q)a(s,x)| ds,\quad t\in[0,T],\ \ x\in M.$$
Thus, we deduce from Lebesgue's dominated convergence theorem
that 
$$\lim_{\sigma\to+\infty}\norm{\int_0^te^{i\sigma s}F_\sigma(s,\cdot)ds}_{L^2(M)}=0, \quad t \in [0,T]. $$
Combining this with 
\begin{align*}
&\norm{\int_0^{t_2}e^{i\sigma s}F_\sigma(s,\cdot)ds-\int_0^{t_1}e^{i\sigma s}F_\sigma(s,\cdot)ds}_{L^2(M)}
\\&\qquad
\leq (t_2-t_1)\norm{(\partial_t^2-\Delta_g+q)a}_{L^\infty(0,T;L^2(M))},
\quad 0 \leq t_1<t_2 \leq T,
\end{align*}
we deduce that
\bel{ll1c}\lim_{\sigma\to+\infty}\eta_\sigma=\lim_{\sigma\to+\infty}\sup_{t\in[0,T]}\norm{\int_0^te^{i\sigma s}F_\sigma(s,\cdot)ds}_{L^2(M)}=0.\ee
\end{proof}

\subsection{Proof of Theorem \ref{t1full}}

Let $q_j \in L^\infty((0,T) \times M)$, $j=1,2$.
Applying Lemma \ref{Ll1}, for $j=1,2$, we obtain a solution $u_j\in \mathcal H$ of $\pd_t^2u_j-\Delta_gu_j+q_ju_j=0$ having the form 
$$
u_1(t,x)=a_1(t,x)e^{i\sigma(\psi(x)+t)}+R_{1,\sigma}(t,x),\quad u_2(t,x)=a_2(t,x)e^{-i\sigma(\psi(x)+t)}+R_{2,\sigma}.
$$
Here the amplitudes $a_j$, $j=1,2$, are defined in the polar normal coordinates associated to $y\in\pd M_1$
by
\bel{t1c_ampl}a_1(t,r,\theta)=e^{-{\mu (r+t)\over2}}h(\theta)b(r,\theta)^{-1/4},\quad a_2(t,r,\theta)=e^{-{\mu (r+t)\over2}}b(r,\theta)^{-1/4},
\ee
where $h$ is an arbitrary smooth function on the unit sphere at $y$.
Notice that we have chosen $\chi=1$ identically in (\ref{a1}),
and also that the fact that $R_{j,\sigma}$ can be chosen to satisfy vanishing initial conditions (\ref{gGO}) is not used in the full data case. 

We fix $v_1\in\mathcal H$ to be the solution of
\begin{align}
\label{eq_v1}
\left\{ \begin{array}{ll}  \pd_t^2 v_1-\Delta_{g} v_1 + q_1 v_1  =  0, & \mbox{in}\ (0,T)\times M,\\  
v_1  =  u_2, & \mbox{on}\ (0,T)\times \partial M,\\  
 v_1(0,\cdot)  =u_2(0,\cdot),\quad  \pd_t v_1(0,\cdot)  =\pd_t u_2(0,\cdot) & \mbox{in}\ M.
\end{array} \right.
\end{align}
The boundary conditions (\ref{gGO}) for $R_{\sigma, 2}$ imply that $u_2 = a_2 e^{-i\sigma(\psi(x)+t)}$ on the lateral boundary $(0,T)\times \partial M$. In particular, $u_2$ is smooth there and we deduce from \cite{LLT} that $\pd_\nu v_1 \in L^2((0,T)\times \pd M)$. 
We have also $\pd_\nu u_j \in L^2((0,T)\times \pd M)$ by Lemma \ref{Ll1}.
These regularity properties justify  
the integration by parts below.

The function $u=v_1-u_2$ solves
\bel{eq_u}
\left\{ \begin{array}{ll}  \pd_t^2 u-\Delta_{g} u + q_1 u  =  q u_2, & \mbox{in}\ (0,T)\times M,\\  
u  =  0, & \mbox{on}\ (0,T)\times \partial M,\\  
 u(0,\cdot)  =0,\quad  \pd_tu(0,\cdot)  =0 & \mbox{in}\ M,
\end{array} \right.
\ee
where $q = q_2-q_1$.
Moreover, $B_{q_1} = B_{q_2}$ implies that it satisfies also the boundary conditions $\pd_\nu u|_{(0,T) \times \pd M)} = 0$
and $u(T,\cdot) = \pd_t u(T,\cdot) = 0$.
Thus
\begin{align*}
&\int_0^T \int_M q u u_1 dV_g dt 
\\&\quad=
\int_0^T \int_M (\pd_t^2 u-\Delta_{g} u + q_1 u) u_1\, dV_g dt
- \int_0^T \int_M u (\pd_t^2 u_1-\Delta_{g} u_1 + q_1 u_1)\, dV_g dt
\end{align*}
vanishes by integration by parts.
It follows
\[\int_0^T\int_{M}qa_1a_2\, dV_gdt+\int_0^T\int_M Z_\sigma\, dV_gdt=0\]
with $Z_\sigma=q(a_1 R_{2,\sigma} e^{i\sigma(\psi(x)+t)}+a_2R_{1,\sigma}e^{-i\sigma(\psi(x)+t)}+R_{1,\sigma} R_{2,\sigma})$. 
Then in view of \eqref{gGO2} sending $\sigma\to+\infty$ we get
\begin{align}
\label{t1c}
\int_0^{T}\int_{M}qa_1a_2\, dV_gdt=0.
\end{align}

It remains to show that \eqref{t1c} implies $q_1=q_2$.
We extend $q$ by zero to $(0,+\infty) \times M_1$.
Denoting by $\tau_+(y,\theta)$ the time of existence in $M_1$ of the maximal geodesic $\gamma_{y,\theta}$ satisfying $\gamma_{y,\theta}(0)=y$ and $\gamma_{y,\theta}'(0)=\theta$, we obtain in the polar normal coordinates
\begin{align}
\label{vanishing}
\int_0^{+\infty}\int_{S_yM_1}\int_{0}^{\tau_+(y,\theta)}\tilde{q}(t,r,\theta)h(\theta)e^{-{\mu (r+t)}}
\, dr d\theta dt =0,
\end{align}
for all $ h \in C^\infty(S_y M_1)$, $y \in \pd M_1$ and $\mu > 0$.
We used here the fact that $dV_g$ is given by $b(r,\theta)^{1/2}drd\theta$ in the polar normal coordinates. 

The attenuated geodesic ray transform $I_\mu$ 
on the inward pointing boundary of the unit sphere bundle 
$\pd _+SM_1=\{(x,\theta)\in SM_1:\ x\in\pd M_1,\ \left\langle \theta,\nu(x)\right\rangle_g<0\}$ is defined by
\[I_\mu f(x,\theta)=\int_0^{\tau_+(x,\theta)}f(\gamma_{x,\theta}(r))e^{-\mu r}dr,\quad (x,\theta)\in\pd _+SM_1,\ f\in C^\infty(M_1).\]
Here $\mu > 0$ gives constant attenuation. The map 
$I_\mu$ admits a unique continuous extension to the distributions on $M_1$.
We denote by $\mathcal L_\mu$ 
the Laplace transform with respect to $t\in(0,+\infty)$, that is,
\[\mathcal L_\mu f =\int_0^{+\infty} f(t)e^{-\mu t}dt, \quad f\in L^1(0,+\infty). \] 
We see that (\ref{vanishing}) is equivalent with 
$I_\mu \mathcal L_\mu q = 0$, $\mu > 0$, in the sense of distributions on $\pd_+SM_1$.

We deduce from \cite[Proposition 4.1]{FSU} that $\mathcal L_\mu q \in C_0^\infty(M_1)$ for all $\mu > 0$.
Then \cite[Section 7]{DKSU} implies that
there is $\epsilon > 0$ such that $\mathcal L_\mu q = 0$ for $\mu \in (0,\epsilon)$.
Using the fact that $z\mapsto \mathcal L_z q$ is holomorphic in $\{z\in\mathbb C:\ \text{Re} z>0\}$ we see that $\mathcal L_\mu q=0$ for $\mu > 0$. Thus $q=0$.

\subsection{Proof of Theorem \ref{t1}}

Let $q_j \in L^\infty((0,T) \times M)$, $j=1,2$ and let us assume the condition \eqref{t1a} be fulfilled.
Repeating the arguments of Lemma \ref{Ll1}, for $j=1,2$, we obtain a solution $u_j\in \mathcal H$ of $\pd_t^2u_j-\Delta_gu_j+q_ju_j=0$ having the form 
$$ u_1(t,x)=a_1(t,x)e^{-i\sigma(\psi(x)+t)}+R_{1,\sigma}(t,x),
$$
$$
u_2(t,x)=a_2(t,x)e^{i\sigma(\psi(x)+t)}-a_2(-t,x)e^{i\sigma(\psi(x)-t)}+R_{2,\sigma}(t,x).$$
Here the remainder terms $R_{j,\sigma}$, $j=1,2$,
are chosen so that 
$$
R_{1,\sigma}(T,\cdot) = \pd_t R_{1,\sigma}(T,\cdot) = 0, \quad R_{2,\sigma}(0,\cdot)= \pd_t R_{2,\sigma}(0,\cdot) = 0, 
$$
and the amplitudes $a_j$, $j=1,2$, are defined by (\ref{t1c_ampl}).
Note that here $u_2(0,\cdot)=0$.

We fix $v_1\in\mathcal H$ to be again the solution of (\ref{eq_v1})
and set $u=v_1-u_2$. Then $u$ satisfies again (\ref{eq_u}).
Since $v_1(0,\cdot)=u_2(0,\cdot)=0$, the condition $\mathcal C(q_1,0) = \mathcal C(q_2,0)$ implies that $u$ satisfies also the boundary conditions $\pd_\nu u|_{(0,T) \times \pd M)} = 0$
and $u(T,\cdot) = \pd_t u(T,\cdot) = 0$.
Thus the same integration by parts as in the proof of Theorem \ref{t1} gives
\[\int_0^T\int_{M}qa_1a_2\, dV_gdt+\int_0^T\int_M Y_\sigma\, dV_gdt-\int_0^T\left(\int_M q(t,x)a_1(t,x)a_2(-t,x) dV_g\right)e^{-2i\sigma t}dt=0\]
with $$Y_\sigma=q\left[ R_{1,\sigma} u_2+a_1R_{2,\sigma}e^{-i\sigma(\psi(x)+t)}\right].$$ 
Applying the  Riemann-Lebesgue lemma we get
$$\lim_{\sigma\to+\infty}\int_0^T\left(\int_M q(t,x)a_1(t,x)a_2(-t,x) dV_g\right)e^{-2i\sigma t}dt=0$$
and \eqref{gGO2} implies
$$\lim_{\sigma\to+\infty}\int_0^T\int_M Y_\sigma\, dV_gdt=0.$$
Therefore, we get (\ref{t1c}) and, by the proof of Theorem \ref{t1}, it follows that $q=0$.

\subsection{Proof of Theorem \ref{t1var}}

Now let us show that for $T>\textrm{Diam}(M)$, also \eqref{Tt1d} implies \eqref{t1c}. For this purpose, without loss of generality we can assume that $M_1$ is chosen in such a way that $T>\textrm{Diam}(M_1)$ and we consider $u_j$, $j=1,2$, of the form
$$ u_1(t,x)=a_1(t,x)e^{-i\sigma(\psi(x)+t)}-a_1(2T-t,x)e^{-i\sigma(\psi(x)+(2T-t))}+R_{1,\sigma},
$$
$$ u_2(t,x)=a_2(t,x)e^{i\sigma(\psi(x)+t)}-a_2(-t,x)e^{i\sigma(\psi(x)-t)}+R_{2,\sigma},
$$
where the amplitudes $a_j$, $j=1,2$, are now defined in the polar normal coordinates associated to $y\in\pd M_1$
by
$$a_1(t,r,\theta)=e^{-{\mu (r+t)\over2}}\chi(r+t)h(\theta)b(r,\theta)^{-1/4},\quad a_2(t,r,\theta)=e^{-{\mu (r+t)\over2}}\chi(r+t)b(r,\theta)^{-1/4}.
$$
Here, as before, $h$ is an arbitrary smooth function on the unit sphere at $y$, 
and $\chi\in\mathcal C^\infty(\R)$ satisfies $\chi=1$ on a neighborhood of $[0,\textrm{Diam}(M_1)+T]$.
By repeating the above proof once again, we deduce that \eqref{Tt1d} implies
$$\int_0^T \int_M q a_1 a_2 dV_g dt+\int_0^T \int_M qX_\sigma dt+e^{-2i\sigma T}\int_0^T \int_M qa_1(2T-t,x)a_2(-t,x)dV_gdt=0$$
 with $$X_\sigma(t,x)=-a_1(t,x)a_2(-t,x)e^{-2i\sigma t}-a_1(2T-t,x)a_2(t,x)e^{-2i\sigma (T-t)}+R_{2,\sigma}(u_1-R_{1,\sigma})+R_{1,\sigma}u_2(t,x).$$
It follows that
$$\lim_{\sigma\to+\infty}\int_0^T \int_M qX_\sigma dt=0$$
and the expression 
$$\int_0^T \int_M qX_\sigma dt+e^{-2i\sigma T}\int_0^T \int_M qa_1(2T-t,x)a_2(-t,x)dV_gdt$$
admits a limit and  vanishes as $\sigma\to+\infty$ if and only if
$$\int_0^T \int_M qa_1(2T-t,x)a_2(-t,x)dV_gdt=0.$$
This condition will be fulfilled if for all $(t,x)\in (0,T)\times M$ we have $a_1(2T-t,x)a_2(-t,x)=0$. On the other hand, assuming supp$\chi\subset(-\epsilon,\textrm{Diam}(M_1)+T+\epsilon)$ with $2\epsilon<T-\textrm{Diam}(M_1)$, this last condition will be fulfilled and we will deduce \eqref{t1c}. Indeed, note first that
for $r<t-\epsilon$ we have $\chi(r-t)=0$. On the other hand, for $r\geq t-\epsilon$, we have $r+2T-t\geq 2T-\epsilon =T+\textrm{Diam}(M_1)+\epsilon+(T-\textrm{Diam}(M_1)-2\epsilon)>T+\textrm{Diam}(M_1)+\epsilon$. Thus, for $r<t-\epsilon$ we have $\chi(r-t)=0$ and for $r\geq t-\epsilon$ we have $\chi(2T-t+r)=0$. It follows that 
$$\chi(r-t)\chi(2T-t+r)=0,\quad (r,t)\in\R\times\R$$
from which we deduce that
$$a_1(2T-t,x)a_2(-t,x)=0,\quad (t,x)\in (0,T)\times M,$$
and that \eqref{t1c} holds.
The new factor $\chi$ does not cause any changes in the remaining steps of the proof, 
since $\chi=1$ on a neighbourhood of $[0,\textrm{Diam}(M_1)+T]$.

\section{$L^2$-solutions for the direct problem}

From now on we consider the partial data result stated in Theorem \ref{thm1} and we assume that $(M,g)$ satisfies the conditions of this theorem.
In order to perform a duality argument, see Lemma \ref{l3} below, 
we consider solutions to the wave equation that are only in $L^2((0,T) \times M)$.
Let us introduce  the  space
\[H_{\Box_{a,g}}((0,T)\times  M)=\{u\in L^2((0,T)\times  M):\ \Box_{a,g} u \in L^2((0,T)\times  M)\}.\]
It follows from \cite[Theorems B.2.7 and B.2.9]{Ho3}
that the traces 
$$
\p_t^j u|_{t=s}, \quad s = 0,T,\ j=0,1,\ u \in H_{\Box_{a,g}}((0,T)\times  M),
$$
are well-defined as distributions in $\mathcal D'(M)$.
Combining the same argument with a use of boundary normal coordinates, see e.g. \cite[Corollary C.5.3]{Ho3}, 
we see also that the traces
$$
\p_\nu^j u|_{x \in \p M}, \quad j=0,1,\ u \in H_{\Box_{a,g}}((0,T)\times  M),
$$
are well-defined as distributions in $\mathcal D'((0,T) \times \p M)$.

We consider the space
\[S=\{u\in L^2(((0,T)\times  M):\ \Box_{a,g}u=0\}\]
topologized  as a closed subspace of $L^2((0,T)\times  M)$ and 
define the map 
$$
\tau_0 u = (u|_{(0,T) \times \p M}, u|_{\{0\} \times M}, \p_t u|_{\{0\} \times M}), \quad u \in S.
$$
Moreover, we denote by $\mathcal{B}$ the range of $\tau_0$, that is,
$\mathcal{B}=\{\tau_0u:\ u\in S \}$.
The uniqueness of the weak solution of $\Box_{a,g}u=0$, satisfying the vanishing initial and lateral boundary conditions $\tau_0 u = 0$,
implies that the inverse $\tau_0^{-1} : \mathcal{B} \to S$ exists.
We use  $\tau_0^{-1}$ to define a norm on $\mathcal{B}$ by
\[\norm{(f,v_0,v_1)}_{\mathcal{B}}=\norm{\tau_0^{-1}(f,v_0,v_1)}_{L^2((0,T)\times  M)},\quad (f,v_0,v_1)\in\mathcal{B}.\]

Let $U \subset (0,T) \times \p M$ be the set in the definition of the restricted Cauchy data set $\mathcal C_{q,*}$. 
We define the subspace  
\[\mathcal{B}_U=\{(f,v_1) :\ (f,0,v_1)  \in \mathcal{B},\ \supp(f) \subset U\}.\]
We are now ready to show that $\mathcal C_{q,*}$ is a graph.

\begin{prop}\label{p6} Let $(f,v_1)\in \mathcal{B}_U$ and $q\in L^\infty((0,T)\times  M)$. Then the initial boundary value problem
\begin{equation}\label{eq1}\left\{\begin{array}{ll}a^{-1}\partial_t^2u-\Delta_g u+q(t,x)u=0,\quad &\textrm{in}\ (0,T)\times  M,\\  
u=f,\quad &\textrm{on}\ (0,T)\times \pd M,
\\ 
u(0,\cdot)=0,\quad \partial_tu(0,\cdot)=v_1,\quad &\textrm{in}\ M,
\end{array}\right.\end{equation}
admits a unique weak solution $u\in L^2((0,T)\times  M)$ satisfying 
\bel{p6a}
\norm{u}_{L^2((0,T)\times  M)}\leq C\norm{(f,v_1)}_{\mathcal{B}_U}.
\ee
In particular, $\mathcal C_{q,*}$ is the graph of the boundary operator $B_{q,*} (f,v_1) = (\p_\nu u_{|V}, u_{|t=T})$.
\end{prop}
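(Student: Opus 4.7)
The plan is to establish existence by the decomposition $u = u_0 + w$, where $u_0$ is the potential-free $L^2$ solution carrying all the non-homogeneous data and $w$ is an energy-class correction; then to prove uniqueness by a duality argument against the adjoint IBVP; and finally to read off the graph property.

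For existence, I would set $u_0 = \tau_0^{-1}(f, 0, v_1) \in L^2(\M)$, which by definition of $\mathcal{B}$ solves $\Box_{a,g} u_0 = 0$ with data $(f,0,v_1)$ and satisfies $\|u_0\|_{L^2(\M)} = \|(f, v_1)\|_{\mathcal{B}_U}$. The remainder $w = u - u_0$ then has to solve
\begin{equation*}
a^{-1}\partial_t^2 w - \Delta_g w + qw = -qu_0 \text{ in } \M,\quad w|_{(0,T)\times \partial M} = 0,\quad w(0,\cdot) = \partial_t w(0,\cdot) = 0.
\end{equation*}
Since $-qu_0 \in L^2(\M)$, this is a standard IBVP in the energy class $\mathcal{H}$: Theorem 2.1 of \cite{LLT}, already invoked in Lemma \ref{Ll1}, supplies a unique $w \in \mathcal{H}$ with $\|w\|_{\mathcal{H}} \leq C\|qu_0\|_{L^2(\M)} \leq C\|q\|_{L^\infty}\|(f, v_1)\|_{\mathcal{B}_U}$. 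Thus $u = u_0 + w \in L^2(\M)$ solves \eqref{eq1} and the triangle inequality yields \eqref{p6a}.

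For uniqueness, suppose $u \in L^2(\M)$ solves \eqref{eq1} with $f = 0$ and $v_1 = 0$. Then $\Box_{a,g} u = -qu \in L^2(\M)$, so $u \in H_{\Box_{a,g}}(\M)$, and by the trace statements recalled at the opening of Section 3 the distributions $u|_{t=0}$, $\partial_t u|_{t=0}$, $u|_{(0,T) \times \partial M}$ are well-defined and vanish. Given $\phi \in C_c^\infty((0,T) \times \mathrm{int}(M))$, I solve via time reversal the backward adjoint IBVP
\begin{equation*}
a^{-1}\partial_t^2 v - \Delta_g v + qv = \phi,\quad v|_{(0,T)\times\partial M} = 0,\quad v(T,\cdot) = \partial_t v(T,\cdot) = 0,
\end{equation*}
obtaining $v \in \mathcal{H}$ with $\partial_\nu v \in L^2((0,T) \times \partial M)$, again by \cite{LLT}. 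A Green's identity pairing produces $\int_\M u\phi\, dV_g dt$ equal to $\int_\M (\Box_{a,g}u + qu)v\, dV_g dt$ plus boundary contributions at $t = 0$, $t = T$, and on $(0,T) \times \partial M$; the interior term vanishes by the equation, while each boundary term pairs a vanishing distributional trace of $u$ or $v$ against a regular trace of the other and therefore vanishes. As $\phi$ is arbitrary, $u = 0$.

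Having existence, uniqueness, and the estimate \eqref{p6a}, the map $(f, v_1) \mapsto u$ is well-defined on $\mathcal{B}_U$, and since $u \in H_{\Box_{a,g}}(\M)$ the traces $\partial_\nu u|_V$ and $u|_{t=T}$ make sense as distributions, so $B_{q,*}(f,v_1) = (\partial_\nu u|_V, u|_{t=T})$ is a well-defined operator whose graph coincides with $\mathcal C_{q,*}$; the support constraint $\mathrm{supp}\, u|_{(0,T)\times \partial M} \subset U$ is automatic from the definition of $\mathcal{B}_U$. The main obstacle in this plan is the rigorous justification of Green's identity in the duality step: because $u$ lies only in $H_{\Box_{a,g}}(\M)$ with distributional traces while $v$ is energy-class, one must combine the $H_{\Box_{a,g}}$ trace theorems from \cite{Ho3} with the sharper trace regularity of $v$ to pair these traces unambiguously and confirm that every boundary contribution collapses to zero.
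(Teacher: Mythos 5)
Your proposal is correct and follows essentially the same route as the paper: the paper constructs the solution by the identical decomposition $u=\tau_0^{-1}(f,0,v_1)+w$ with $w\in\mathcal H$ solving the homogeneous-data problem with source $-q\,\tau_0^{-1}(f,0,v_1)$, and reads off the estimate from the energy bound on $w$. The only difference is that you spell out the uniqueness of the $L^2$ solution via a duality argument against the adjoint problem, whereas the paper simply asserts uniqueness; this is a reasonable and welcome addition, not a divergence in method.
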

\begin{proof} 
Consider the function $u=v+\tau_0^{-1}(f,0,v_1)$ where $v$ solves
\bel{Eq1}\left\{ \begin{array}{rcll} a^{-1}\partial_t^2v-\Delta_g v+qv& = & -q\tau_0^{-1}(g,0,v_1), & (t,x) \in (0,T)\times  M ,\\ 
v_{\vert(0,T)\times \pd M}& = & 0,&\\
v_{\vert t=0}=\partial_t v_{\vert t=0}&=&0.&
\end{array}\right.
\ee
Since $\tau_0^{-1}(f,0,v_1)\in L^2((0,T)\times  M)$, the equation \eqref{Eq1} admits a unique solution $v\in \mathcal H$, see e.g. Section 8 of Chapter 3 of \cite{LM1}, satisfying
\bel{p6b}\begin{aligned}
\norm{v}_{\mathcal H} &\leq C\norm{-q\tau_0^{-1}(g,0,v_1)}_{L^2((0,T)\times  M)}
\leq C\norm{q}_{L^\infty((0,T)\times  M)}\norm{\tau_0^{-1}(f,v_0,v_1)}_{L^2((0,T)\times  M)}.\end{aligned}\ee
Therefore, $u=v+\tau_0^{-1}(f,0,v_1)$ is the unique solution of \eqref{eq1}, and \eqref{p6b} implies \eqref{p6a}. 
\end{proof}

\section{Carleman estimate}

\begin{thm}\label{c1}  Let $q\in L^\infty((0,T)\times M)$, $\beta\in[1/2,1]$ and $u\in C^2([0,T]\times M)$.  
We use the following notation $s_- = 0$, $s_+ = T$, 
$\psi(x,t) = \beta t + x_1$, $\psi_-(x_1) = -\beta T - x_1$
and $\psi_+(x_1) = x_1$.
If $u$ satisfies the condition 
 \begin{equation}\label{ttc1}u_{\vert (0,T)\times\pd M}=0,\quad u_{\vert t=s_\pm}=\partial_tu_{\vert t=s_\pm}=0,\end{equation}
then there exist constants $\sigma_1>1$ and $C > 0$ depending only on  $M$, $T$ and $\norm{q}_{L^\infty((0,T)\times M)}$ such that the estimate
\begin{equation}\label{c1a}\begin{array}{l}
\sigma \int_M e^{2\sigma \psi_\pm}\abs{\partial_tu(s_\mp,x)}^2dV_g(x)\\
+\sigma\int_{\Sigma_{\mp}}e^{\pm 2\sigma \psi}\abs{\partial_\nu u}^2\abs{\pd_\nu\phi } d\sigma_g(x)dt
+\sigma^2\int_{(0,T)\times M}e^{\pm2\sigma \psi}\abs{u}^2dV_g(x)dt\\
\leq 
C\left(\int_{(0,T)\times M}e^{\pm 2\sigma \psi}\abs{(\Box_{a,g} +q)u}^2dxdt+ \sigma^3\int_M e^{2\sigma \psi_\pm}\abs{u(s_\mp,x)}^2dV_g(x)\right)\\
 +C\left(\sigma\int_M e^{2\sigma\psi_\pm}\abs{\nabla_gu(s_\mp,x)}_{g}^2dV_g(x)
 +\sigma\int_{\Sigma_{\pm}}e^{\pm 2\sigma\psi}\abs{\partial_\nu u}^2\abs{\pd_\nu\phi } d\sigma_g(x)dt\right)\end{array}\end{equation}
holds true for $\sigma\geq \sigma_1$.
\end{thm}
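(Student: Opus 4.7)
The plan is a weighted energy computation tailored to the characteristic weight $\psi(t,x) = \beta t + x_1$. The key structural observation is that the principal symbol of $\Box_{a,g}$ applied to $d\psi$ equals $a^{-1}\beta^2 - g^{11} = a^{-1}(\beta^2 - 1) \leq 0$ (using $g = a(e\oplus g')$, hence $g^{11} = a^{-1}$), so the weight is timelike for $\beta\in[1/2,1)$ and null for $\beta = 1$. By the time-reversal $t\mapsto T-t$ it is enough to treat the $+$ case. First I would conjugate, setting $w = e^{\sigma\psi}u$, so that
\begin{align*}
e^{\sigma\psi}(\Box_{a,g}+q)u = (\Box_{a,g}+q)w + 2\sigma a^{-1}(\partial_{x_1}-\beta\partial_t)w + \sigma^2(\beta^2-1)a^{-1}w + \sigma(\Delta_g\psi)w.
\end{align*}
Under \eqref{ttc1} the function $w$ vanishes on the lateral boundary and satisfies $w = \partial_t w = 0$ at $t = T$, so boundary contributions survive only at $t = 0$ and (through normal derivatives of $w$) on the lateral boundary.

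\textbf{Main identity.} I would split the conjugated operator $e^{\sigma\psi}(\Box_{a,g}+q)e^{-\sigma\psi} = A + B$ in $L^2(\M, dV_g dt)$, where $A$ is formally self-adjoint (collecting $\Box_{a,g} + q + \sigma^2(\beta^2-1)a^{-1}$, $\sigma(\Delta_g\psi)$, together with the symmetric remainders that arise from separating $a^{-1}\partial_{x_1}$ and $a^{-1}\partial_t$ into skew plus symmetric parts) and $B$ is formally skew-adjoint (essentially $2\sigma a^{-1}(\partial_{x_1}-\beta\partial_t)$ plus the corresponding skew corrections). Expanding $\|(A+B)w\|_{L^2(\M)}^2$ and integrating the cross term by parts, I obtain
\begin{align*}
\int_\M \bigl|e^{\sigma\psi}(\Box_{a,g}+q)u\bigr|^2 dV_g dt = \|Aw\|^2 + \|Bw\|^2 + \langle[A,B]w, w\rangle + \mathcal B(w),
\end{align*}
where $\mathcal B(w)$ is the sum of the boundary integrals on $\partial\M$. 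Reverting from $w$ to $u$, the lateral piece reduces to a multiple of $\int_{(0,T)\times\partial M}e^{2\sigma\psi}|\partial_\nu u|^2\partial_\nu\phi\, d\sigma_g dt$, which splits into the $\Sigma_\pm$ integrals appearing in \eqref{c1a} according to the sign of $\partial_\nu\phi$; the $t = 0$ piece produces the $\sigma\int|\partial_t u|^2$, $\sigma\int|\nabla_g u|^2$, and $\sigma^3\int|u|^2$ contributions at the bottom slice, with signs matching \eqref{c1a}.

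\textbf{Bulk coercivity, closure, and the main obstacle.} The commutator $[A,B]$ consists of a second-order operator in $w$ of coefficient size $O(\sigma)$ (coming from $2\sigma[\Box_{a,g}, a^{-1}(\partial_{x_1}-\beta\partial_t)]$, which depends only on derivatives of $a$ and $g$) and a zeroth-order piece of size $O(\sigma^3)$ proportional to $(1-\beta^2)\partial_{x_1}(a^{-2})$; both are controlled by $C\sigma\|w\|_{H^1(\M)}^2 + C\sigma^3\|w\|_{L^2(\M)}^2$ and may have arbitrary sign. The principal positive contribution therefore has to come from $\|Bw\|^2 \geq c\sigma^2\int_\M|(\partial_{x_1}-\beta\partial_t)w|^2 dV_g dt$. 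To transfer this to $\sigma^2\int|w|^2$, I would use the vanishing of $w$ on $\{T\}\times M$ together with a Poincaré-type inequality along the characteristic lines $x_1 - \beta t = \mathrm{const}$ in the $(t,x_1)$-plane, yielding $\int_\M|w|^2 \leq C_T\int_\M|(\partial_{x_1}-\beta\partial_t)w|^2$. Choosing $\sigma_1$ large then absorbs the error contributions. The main obstacle is precisely the characteristic case $\beta = 1$: the weight then lies in the characteristic set of $\Box_{a,g}$, the classical H\"ormander pseudoconvexity condition fails, and the commutator provides no leading-order positive term; the remedy---drawing the bulk $\sigma^2$ coercivity directly from $\|Bw\|^2$ rather than from the commutator, and then invoking the characteristic-direction Poincar\'e inequality---is what makes the estimate hold uniformly in $\beta \in [1/2, 1]$.
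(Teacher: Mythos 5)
Your overall strategy is the same as the paper's: conjugate with the characteristic weight, split the conjugated operator into its symmetric part $P_{1,\sigma}=\Box-(1-\beta^2)\sigma^2$ and skew part $\sigma P_2=2\sigma(\beta\partial_t-\partial_{x_1})$, read the boundary terms in \eqref{c1a} off the cross term, and -- the key point you correctly identify -- extract the bulk $\sigma^2$ coercivity not from the commutator (which degenerates at $\beta=1$) but from $\sigma^2\norm{P_2v}^2$ combined with a Poincar\'e inequality along the integral curves of $\beta\partial_t-\partial_{x_1}$; the paper proves exactly this inequality, with constant $T^2/(\beta-\tfrac14)$, starting from the identity $\int_0^\tau\int_M(P_2v)v\,dV_g\,dt=\beta\int_M|v(\tau,\cdot)|^2dV_g$, which is where the restriction $\beta\geq 1/2$ enters.

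There is, however, a genuine gap in your handling of the commutator errors for non-constant $a$. You correctly note that $[A,B]$ contains a second-order operator with $O(\sigma)$ coefficients and a zeroth-order term $\sigma^3(1-\beta^2)\partial_{x_1}(a^{-2})$ of arbitrary sign, but your claim that both are absorbed "by choosing $\sigma_1$ large" cannot be right: the left-hand side of \eqref{c1a} contains no bulk gradient term, so there is nothing into which $C\sigma\norm{w}_{H^1}^2$ could be absorbed, and the $O(\sigma^3)\norm{w}_{L^2}^2$ error \emph{dominates} the available $\sigma^2\norm{w}_{L^2}^2$ positivity as $\sigma\to\infty$ instead of being absorbed by it. The paper eliminates these terms before they arise: its first step is the conformal gauge reduction \eqref{red}, $u=a^{-(d-2)/4}v$, which replaces $\Box_{a,g}+q$ by $\Box_{e\oplus g'}+q_a$ with the product metric $e\oplus g'$ independent of both $t$ and $x_1$. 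After this reduction $P_2$ commutes exactly with the leading operator and with the volume density, every interior contribution of the cross term is an exact divergence, and only the boundary terms of \eqref{c1a} survive (the potential $q$ is then reinstated perturbatively, absorbing $2C\norm{q}_{L^\infty}^2|u|^2$ into the $\sigma^2$ term). Your argument closes once this reduction, or an equivalent observation that the geometric hypothesis makes the commutator vanish identically, is inserted at the outset; without it the absorption step fails.
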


Let us first remark that
\bel{red} (a^{-1}\pd_t^2-\Delta_g+q)\left(a^{-{d-2\over 4}}v\right)=a^{-{d+2\over 4}}\left(\pd_t^2v-\Delta_{e\oplus g'}v+q_av\right),\ee
with $q_a=aq+a^{d+2\over 4}\Delta_g\left(a^{-{d-2\over 4}}\right)$.  
Thus by replacing $q$ with $q_a$, we can assume that $a=1$. From now on, throughout this section, we assume that $a=1$ and consider the leading order wave operator 
$\Box_{e\oplus g'}=\Box_{1,{e\oplus g'}}=\pd_t^2-\Delta_{e\oplus g'}$.

In order to prove the above Carleman estimate, we fix $u\in  C^2(\overline{Q})$ satisfying \eqref{ttc1} 
and we set $v=e^{-\sigma(\beta t+x_1)}u$. Then, fixing  $P_s=e^{-s(\beta t+x_1)}(\partial_t^2-\Delta_{e\oplus g'})e^{s(\beta t+x_1)}$, $s\in\R$,
we get
\begin{equation}\label{c1c}e^{-\sigma(\beta t+x_1)}\Box_g u=P_{\sigma}v
\end{equation}
We begin by proving the following estimate for the conjugated operator $P_{\sigma}$.

\begin{lem}\label{tc} Let $v\in \mathcal C^2([0,T]\times M)$ and $\sigma>1$. If $v$ satisfies the condition
 \begin{equation}\label{tc1}v_{\vert (0,T)\times\pd M}=0,\quad v_{\vert t=0}=\partial_tv_{\vert t=0}=0\end{equation}
then the estimate
 \begin{equation}\label{tc2}\begin{array}{l}{1\over2}\sigma\int_M\abs{\partial_tv(T,x)}^2dV_g(x)+2\sigma\int_{\Sigma_{+}}\abs{\partial_\nu v}^2\pd_\nu\phi  d\sigma_g(x)dt+c\sigma^2\int_{(0,T)\times M}\abs{v}^2dV_g(x)dt\\
\leq \int_{(0,T)\times M}\abs{P_{\sigma}v}^2dV_g(x)dt+ 7\sigma \int_M\abs{\nabla_gv(T,x)}_g^2dV_g(x)+2\sigma\int_{\Sigma_{-}}\abs{\partial_\nu v}^2\abs{\pd_\nu\phi }d\sigma_g(x)dt\\
\ \ \ \ +2\sigma^3\int_M\abs{v(T,x)}^2dV_g(x)\end{array}\end{equation}
holds true for  $c>0$  depending only on  $\beta$ and $T$. 
\end{lem}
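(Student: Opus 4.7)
The plan is to use a conjugation-plus-multiplier argument, with the twist that the weight $\psi(t,x)=\beta t+x_1$ is (at $\beta=1$) characteristic for the leading wave operator $L:=\partial_t^2-\Delta_{e\oplus g'}$. Decomposing the conjugated operator as $P_\sigma=P_\sigma^++P_\sigma^-$ with symmetric part $P_\sigma^+=L+\sigma^2(\beta^2-1)$ and skew-adjoint part $P_\sigma^-=2\sigma X$, where $X:=\beta\partial_t-\partial_{x_1}$, one checks directly from the fact that $L$ has coefficients depending only on $x'$ and that $X$ has constant coefficients, that $[P_\sigma^+,P_\sigma^-]=0$. Consequently the identity
\[
\|P_\sigma v\|^2=\|P_\sigma^+v\|^2+\|P_\sigma^-v\|^2+2\re\langle P_\sigma^+v,P_\sigma^-v\rangle
\]
receives no interior positive contribution from the usual commutator gain; every positive $\sigma$-growing term in \eqref{tc2} must be produced as a boundary contribution from the cross product.

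I would compute the cross term by a Morawetz-type integration by parts for $L$ paired with $X$, writing
\[
2\re\langle P_\sigma^+v,P_\sigma^-v\rangle=4\sigma\re\langle Lv,Xv\rangle+\sigma^3\beta(\beta^2-1)\int_Mv^2(T,\cdot)\,dV_g,
\]
where the second piece uses $\re\langle v,Xv\rangle=\tfrac{\beta}{2}\int_Mv^2(T,\cdot)\,dV_g$, itself a consequence of \eqref{tc1}. The conditions \eqref{tc1}, combined with the identity $\nabla_gv|_{\partial M}=(\partial_\nu v)\nu$ that follows from $v=0$ on $(0,T)\times\partial M$, collapse the piece $4\sigma\re\langle Lv,Xv\rangle$ into a combination of (i) integrals over $\{T\}\times M$ involving $\|\partial_tv(T)\|^2$, $\|\nabla_gv(T)\|_g^2$ and the mixed term $\int_M\partial_tv(T)\partial_{x_1}v(T)\,dV_g$, plus (ii) a lateral integral $\int_{(0,T)\times\partial M}(\partial_\nu v)^2\partial_\nu\phi\,d\sigma_gdt$. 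Splitting the lateral integral along $\Sigma_\pm$ by the sign of $\partial_\nu\phi$, bounding the mixed top-face term by Young's inequality $|\int_M\partial_tv(T)\partial_{x_1}v(T)|\leq\frac{1}{2\epsilon}\|\partial_tv(T)\|^2+\frac{\epsilon}{2}\|\nabla_gv(T)\|^2$ with $\epsilon$ chosen so that a positive multiple of $\sigma\|\partial_tv(T)\|^2$ survives on the left for all $\beta\in[1/2,1]$ (the choice $\epsilon=4$ is what produces the constants $\tfrac12\sigma$ and $7\sigma$ in \eqref{tc2}), and using $\beta(\beta^2-1)\leq0$ to move the $\sigma^3v^2(T)$ piece to the right, recovers exactly the boundary terms in \eqref{tc2} with the stated coefficients.

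For the interior contribution $c\sigma^2\int|v|^2$ I would use the nonnegative term $\|P_\sigma^-v\|^2=4\sigma^2\|Xv\|_{L^2}^2$ together with a Hardy-Poincar\'e inequality along the integral curves of $X$. These curves are the straight lines $(t,x_1,x')=(t_0+\beta s,x_1^0-s,x_0')$ in the $(t,x_1)$-plane with fixed $x'$, and because $v$ vanishes on both $\{0\}\times M$ and $(0,T)\times\partial M$ by \eqref{tc1}, every backward such curve issued from a point of $(0,T)\times M$ reaches the vanishing set within parameter distance bounded uniformly in terms of $\beta$, $T$ and $\mathrm{diam}(M)$. The fundamental theorem of calculus and the Cauchy-Schwarz inequality applied along these curves give $\|v\|_{L^2}^2\leq C(\beta,T)\|Xv\|_{L^2}^2$, hence $c\sigma^2\|v\|_{L^2}^2\leq\|P_\sigma^-v\|^2/(4C)$, which combines with the boundary-term estimate of the previous paragraph to yield \eqref{tc2}. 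The main obstacle throughout is precisely the identical vanishing of $[P_\sigma^+,P_\sigma^-]$, which forbids a purely commutator-based Carleman argument and forces this two-step strategy (signed boundary accounting plus propagation-along-$X$-characteristics); the most delicate computational point is fitting the mixed top-face term $\int_M\partial_tv(T)\partial_{x_1}v(T)\,dV_g$ into the right-hand side in exactly the Young proportion that preserves a positive multiple of $\sigma\|\partial_tv(T)\|^2$ on the left across the whole range $\beta\in[1/2,1]$.
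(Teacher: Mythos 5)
Your proposal is correct and follows essentially the same route as the paper: the same splitting $P_\sigma=P_{1,\sigma}+\sigma P_2$ with $P_2=2(\beta\pd_t-\pd_{x_1})$, the same integration by parts converting the cross term into top-face and lateral boundary integrals, and the same Young-inequality balancing of the mixed term $\int_M\pd_tv(T,x)\pd_{x_1}v(T,x)\,dV_g$ that produces the constants $\tfrac12\sigma$ and $7\sigma$ uniformly for $\beta\in[1/2,1]$ (your zeroth-order cross term is off by a factor of $2$ --- it should read $2\sigma^3\beta(\beta^2-1)\int_M v^2(T,\cdot)\,dV_g$ --- but since $2\beta(1-\beta^2)\leq 2$ either version is absorbed by the $2\sigma^3\int_M|v(T,x)|^2dV_g$ term on the right). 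The only genuine deviation is the interior term: you obtain $\norm{v}^2\lesssim\norm{P_2v}^2$ by a Poincar\'e inequality along the characteristics of $\beta\pd_t-\pd_{x_1}$, whereas the paper integrates the identity $\int_0^\tau\int_M(P_2v)v\,dV_g\,dt=\beta\int_M|v(\tau,x)|^2dV_g$ over $\tau\in(0,T)$ and applies Cauchy--Schwarz; both arguments work, and since every backward characteristic reaches $\{t=0\}$ within parameter length $T/\beta$, your constant can indeed be taken to depend only on $\beta$ and $T$, with no need for the diameter dependence you mention.
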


\begin{proof} Without loss of generality we assume that $v$ is real valued. 
We fix $v\in C^2([0,T]\times M)$ satisfying \eqref{tc1} and consider
\[I_\sigma=\int_0^T\int_M\abs{e^{-\sigma(\beta t+x_1)}(\partial_t^2-\Delta_g)u}^2dV_g(x)dt.\]
For all $s\in \R$ we decompose $P_s$ into two terms 
$P_s=P_{1,s}+sP_2(\pd_t,\pd_{x_1})$,
with $P_{1,s}=\Box_{e\oplus g'}-(1-\beta^2)s^2$ and $P_2(\pd_t,\pd_{x_1})=2(\beta\pd_t-\pd_{x_1})$. We obtain
\begin{align}
\label{ca3}
I_\sigma &=
\int_0^T\int_M|P_\sigma v|^2dV_g(x)dt
\\\notag&=
\int_0^T\int_M\left[|P_{1,\sigma} v|^2+\sigma^2|P_2(\pd_t,\pd_{x_1})v|^2+2\sigma (P_2(\pd_t,\pd_{x_1})v)(\Box_g v+(\beta^2-1)\sigma^2v)\right]dV_g(x)dt.
\end{align} 
Note first that
\bel{caca}\begin{aligned}\int_0^T\int_M(P_2(\pd_t,\pd_{x_1})v)vdV_g(x)dt&=\beta\int_0^T\int_M\pd_t|v|^2dV_g(x)dt-\int_0^T\int_M\pd_{x_1}|v|^2dV_g(x)dt\\
\ &=\beta \int_0^T\int_M\pd_t|v|^2dV_g(x)dt-\int_0^T\int_M\textrm{div}_g(|v|^2e_1)dV_g(x)dt\\
\ &=\beta\int_M|v|^2(T,x)dV_g(x)\geq0.\end{aligned}\ee
Therefore, we have
\bel{caca1} \int_0^T\int_M(2\sigma P_2(\pd_t,\pd_{x_1})v)((\beta^2-1)\sigma^2v)dV_g(x)dt\geq -2\sigma^3\int_M\abs{v(T,x)}^2dV_g(x).\ee
Moreover, we find
\begin{align*}
&2\sigma \int_0^T\int_M(P_2(\pd_t,\pd_{x_1})v)\Box_g vdV_g(x)dts
\\&\quad
=4\beta\sigma \int_0^T\int_M\pd_tv\Box_gvdV_g(x)dt-4\sigma \int_0^T\int_M\pd_{x_1}v\pd_t^2vdV_g(x)dt+4\sigma \int_0^T\int_M\pd_{x_1}v\Delta_gvdV_g(x)dt
\\&\quad
=I_{1,\sigma}+I_{2,\sigma}+I_{3,\sigma}
\end{align*}
Using the fact that $v_{|(0,T)\times \pd M}=0$, $\pd_tv_{|t=0}=v_{|t=0}=0$ and integrating by parts, we obtain
\[I_{1,\sigma}=2\beta\sigma \int_M (|\pd_tv(T,x)|^2+|\nabla_g v(T,x)|_g^2)dV_g(x).\]
In a same way,   integrating by parts in $t\in(0,T)$ we get
\[\begin{aligned}I_{2,\sigma}&=-4\sigma \int_M \pd_tv(T,x)\pd_{x_1}v(T,x)dV_g(x)+4\sigma\int_0^T\int_M\pd_tv\pd_{x_1}\pd_tvdV_{g}(x)dt\\
\ &=-4\sigma \int_M \pd_tv(T,x)\pd_{x_1}v(T,x)dV_g(x)+2\sigma\int_0^T\int_M\pd_{x_1} |\pd_tv|^2(t,x_1,x')dV_{g}(x)dt\\
\ &=-4\sigma \int_M \pd_tv(T,x)\pd_{x_1}v(T,x)dV_g(x)+2\sigma\int_0^T\int_{M}\textrm{div}_g(|\pd_tv|^2e_1)dV_{g}(x)dt\\
\ &=-4\sigma \int_M \pd_tv(T,x)\pd_{x_1}v(T,x)dV_g(x)\\
\ &\geq-{1\over 2}\sigma \int_M |\pd_tv(T,x)|^2dV_g(x)-8\sigma \int_M |\nabla_g v(T,x)|_g^2dV_g(x) .\end{aligned}\]
Here we have used the fact that $|g|$ is independent of $x_1$ and $\pd_tv_{|(0,T)\times \pd M}=0$. Combining the result for $I_{1,\sigma}$ and $I_{2,\sigma}$ we find
\bel{ca4}\begin{aligned}I_{1,\sigma}+I_{2,\sigma}&\geq (2\beta-{1\over 2})\sigma \int_M |\pd_tv(T,x)|^2dV_g(x)+(2\beta-8)\sigma \int_M |\nabla_g v(T,x)|_g^2dV_g(x)\\
\ &\geq{1\over 2}\sigma \int_M |\pd_tv(T,x)|^2dV_g(x)-7\sigma \int_M |\nabla_g v(T,x)|_g^2dV_g(x).\end{aligned}\ee
For $I_{3,\sigma}$, let us first remark that since $g$ is independent of $x_1$, we have
\[\begin{aligned}2\pd_{x_1}v\Delta_gv&=2\textrm{div}_g(\nabla_gv\pd_{x_1}v)-2\left\langle \nabla_gv,\pd_{x_1}\nabla_gv\right\rangle_g\\
\ &=2\textrm{div}_g(\nabla_gv\pd_{x_1}v)-\pd_{x_1}|\nabla_gv|^2_g\\
\ &=2\textrm{div}_g(\nabla_gv\pd_{x_1}v)-\textrm{div}_g(|\nabla_gv|^2_ge_1).\end{aligned}\]
Using this formula  we get
\[\begin{aligned}I_{3,\sigma}&=4\sigma \int_0^T\int_M\textrm{div}_g(\nabla_gv\pd_{x_1}v)dV_{g}(x)dt-2\sigma \int_0^T\int_M\textrm{div}_g(|\nabla_gv|^2_ge_1)dV_{g}(x)dt\\
\ &=4\sigma \int_0^T\int_{\pd M}\pd_{\nu}v\pd_{x_1}vd\sigma_g(x)dt-2\sigma \int_0^T\int_{\pd M}|\nabla_gv|^2_g\left\langle \nu,e_1\right\rangle_gd\sigma_g(x)dt.\end{aligned}\]
Once again, using the fact that $v_{|(0,T)\times \pd M}=0$, we deduce that $\nabla_gv_{|(0,T)\times \pd M}=(\pd_\nu v)\nu$. Moreover, we have 
\[\pd_{x_1}v_{|(0,T)\times \pd M}={\left\langle \nabla_gv,e_1\right\rangle_g}_{|(0,T)\times \pd M}=\pd_\nu v\left\langle \nu,e_1\right\rangle_g\]
and it follows
\[I_{3,\sigma}=2\sigma \int_0^T\int_{\pd M}|\pd_{\nu}v|^2\left\langle \nu,e_1\right\rangle_gd\sigma_g(x)dt.\]
Combining this with \eqref{ca3}-\eqref{ca4}, we get
 \bel{ca5}\begin{aligned}I_\sigma
\geq \int_0^T\int_M\left[|P_{1,\sigma} v|^2+\sigma^2|P_2(\pd_t,\pd_{x_1})v|^2\right]dV_g(x)dt+2\sigma \int_0^T\int_{\pd M}|\pd_{\nu}v|^2\left\langle \nu,e_1\right\rangle_gd\sigma_g(x)dt\\
+{1\over2}\int_M |\pd_tv(T,x)|^2dV_g(x)-7\sigma \int_M |\nabla_g v(T,x)|_g^2dV_g(x)-2\sigma^3 \int_M | v(T,x)|^2dV_g(x).\end{aligned}\ee
In the same way as \eqref{caca}, for all $\tau\in(0,T)$, we find
\[\int_0^\tau\int_M(P_2(\pd_t,\pd_{x_1})v)vdV_g(x)dt=\beta\int_M|v|^2(\tau,x)dV_g(x).\]
Therefore, an application of the Cauchy-Schwarz inequality yields
\[\beta\int_M|v|^2(\tau,x)dV_g(x)\leq T\int_0^T\int_M|P_2(\pd_t,\pd_{x_1})v|^2dV_g(x)dt+{1\over 4T}\int_0^T\int_M|v|^2dV_g(x)dt.\]
Integrating this inequality with respect to $\tau\in(0,T)$ we get
\[\int_0^T\int_M|v|^2dV_g(x)dt\leq {T^2\over \beta-{1\over4}}\int_0^T\int_M|P_2(\pd_t,\pd_{x_1})v|^2dV_g(x)dt.\]
Combining this with  \eqref{ca5}, and observing that $g=e\oplus g'$ implies $\pd_\nu\phi=\left\langle e_1,\nu\right\rangle_g$,  we get \eqref{tc2}. 
\end{proof}

\begin{proof}[Proof of Theorem \ref{c1}]
Let us first consider the case $q=0$. Note  that for $u$ satisfying \eqref{ttc1} with the minus sign, $v=e^{-\sigma(\beta t+x_1)}u$ satisfies  \eqref{tc1}. Moreover, \eqref{ttc1}
and  \eqref{c1c} imply $$\partial_\nu v_{\vert(0,T)\times\pd M}=e^{-\sigma(\beta t+x_1)}\partial_\nu u_{\vert(0,T)\times\pd M}.$$ Finally, using the fact that
\[\partial_tu=\partial_t(e^{\sigma(\beta t+x_1)} v)=\beta\sigma u+e^{\sigma(\beta t+x_1)} \partial_tv,\quad \nabla_g v=e^{-\sigma(\beta t+x_1 )}(\nabla_g u-\sigma ue_1),\]
we obtain
\[\int_M e^{-2\sigma(\beta T+x_1)}\abs{\partial_tu(T,x)}^2dV_g(x)\leq 2\int_M \abs{\partial_tv(T,x)}^2dV_g(x)+2\sigma^2\int_M e^{-2\sigma(\beta T+x_1)}\abs{u(T,x)}^2dV_g(x),\]
\[\int_M\abs{\nabla_g v(T,x)}^2dV_g(x)\leq 2\sigma^2\int_M e^{-2\sigma(\beta T+x_1)}\abs{u(T,x)}^2dV_g(x)+2\int_M e^{-2\sigma(\beta T+x_1)}\abs{\nabla_g u(T,x)}^2dV_g(x).\]
 Thus, applying the Carleman estimate \eqref{tc2} to $v$, we deduce \eqref{c1a}. For $q\neq0$,
we have 
 \[\abs{\partial_t^2u-\Delta_gu }^2=\abs{\partial_t^2u-\Delta_gu+qu-qu}^2\leq 2\abs{(\partial_t^2-\Delta_g+q)u}^2+2\norm{q}^2_{L^\infty((0,T)\times M)}\abs{u}^2\]
 and hence if we choose $\sigma_1>2C\norm{q}^2_{L^\infty((0,T)\times M)}$, replacing $C$ by
 \[C_1=\frac{C\sigma_1^2}{\sigma_1^2-2C\norm{q}^2_{L^\infty((0,T)\times M)}},\]
 we deduce \eqref{c1a}  from the same estimate when $q=0$. 
 
The case with plus sign in \eqref{ttc1} is analogous. 
Note that in this case we can apply Lemma \ref{tc} after the time reversal $t \mapsto T - t$.
\end{proof}

\begin{rmk}\label{rr} Note that, by density, estimate \eqref{c1a} can be extended to any function $u\in\mathcal H$ satisfying \eqref{tc1}, $(a^{-1}\partial_t^2-\Delta_g)u\in L^2((0,T)\times M)$ and $\partial_\nu u\in L^2((0,T)\times \pd M)$.\end{rmk}

\section{Exponentially decaying and  growing geometric optics solutions}

This section starts with a construction of exponentially decaying solutions $u_2\in H^1((0,T)\times M)$   taking the form
\bel{dec}u_1(t,x)=e^{-\sigma(\beta t+\phi(x))}(a_{1,\sigma}(t,x)+R_1(t,x)),\ee
where $\beta\in(0,1]$, $\sigma>0$.
Then we consider exponentially growing solutions $u_2\in L^2((0,T)\times M)$   taking the form
\bel{gro}u_2(t,x)=e^{\sigma(\beta t+\phi(x))}(a_{2,\sigma}(t,x)+R_2(t,x))\ee
and satisfying the additional condition 
$$u_2(t,x)=0,\quad  (t,x)\in(\{0\}\times M)\cup U.$$
Here $R_j$, $j=1,2$, denotes the remainder term in the expression of the solution $u_j$ with respect to the parameter $\sigma$ in such a way that there exists $\gamma\in (0,1)$ such that  $\norm{R_j}_{L^2(Q)}\leq C\sigma^{-\gamma}$. 
We give different arguments in the two cases. 
For the  exponentially decaying solutions  $ u_1\in H^1((0,T)\times M)$, we combine an argument of separation of variables with properties of solutions of PDEs with constant coefficients. For the exponentially growing solutions $u_2$, inspired by \cite{KSU}, we apply the Carleman estimate \eqref{c1a} and the Hahn-Banach theorem to obtain these solutions by duality. 
Combining these two types of solutions we derive Theorem \ref{thm1} in the next section.

\subsection{Exponentially decaying solutions without boundary conditions }

We extend our manifold  $M$ into a cylindrical manifold and we will consider the restriction on $(0,T)\times M$ of exponentially decaying solutions on the extended domain. More precisely, we first fix $R>0$, $M_1\subset int (M_0)$ a simple manifold such that $M\subset (-R/2,R/2)\times M_1$ and we extend $q$ by zero to a function 
lying in $ L^\infty ((0,T)\times(-R,R)\times M_0)$. Then, in view of \eqref{red}, we consider $q_a=aq+a^{d+2\over 4}\Delta_g\left(a^{-{d-2\over 4}}\right)$ and $u=a^{-{d-2\over 4}}v$ where, for $\sigma>1$, $\beta\in[1/2,1]$, $v$ is a solution of
\begin{equation}\label{eqqGO2}\partial_t^2v-\Delta_{e\oplus g'}v+q_a(t,x)v=0\quad \textrm{on } (0,T)\times(-R,R)\times M_1\end{equation}
  taking the form
\begin{equation}\label{GO1} v(t,x)=e^{-\sigma(\beta t+x_1)}(k(t,x)+w(t,x)),\quad (t,x)\in (0,T)\times(-R,R)\times M_1\end{equation}
with $w\in H^1((0,T)\times(-R,R)\times M_1)$ satisfying
\[\norm{w}_{L^2((0,T)\times(-R,R)\times M_1)}\leq \frac{C}{\sigma}.\]
We extend $(M_0,g')$ to a slightly larger simple manifold $(D,g')$,
and define $k(t,x)$ in polar normal coordinate associated to  $y\in \pd D$ in the following way: we fix  $h\in H^2(S_y(D))$ and we consider  $k(t,x_1,r,\theta)=k(t,x_1,\exp_y(r\theta))$ defined  on $(0,T)\times(-R,R)\times \exp_y^{-1}M_0$ by
\begin{equation}\label{GO2} k(t,x_1,r,\theta)=e^{i\sigma(\sqrt{1-\beta^2}) r}e^{-i\mu(t+\beta x_1)}b(r,\theta)^{-1/4}h(\theta),\end{equation}
where $\mu\in \R$ is arbitrary fixed. It is clear that $v$ solves \eqref{eqqGO2} if and only if $w$ solves
\begin{equation}\label{eqGO2} P_{-\sigma}w=-q_aw-e^{\sigma(\beta t+x_1)}(\Box_{e\oplus g'}+q_a)e^{-\sigma(\beta t+x_1)}k(t,x)\ee
with $P_s$, $s\in\R$, the conjugated operator introduced in Section 4. To find a suitable solution of this equation we consider first  equations   of the form
\begin{equation}\label{eqGO3} P_{-\sigma}y=F,\quad (t,x)\in(0,T)\times(-R,R)\times M_1 .\ee
Consider the selfadjoint operator $A=-\Delta_{g'}$ defined as an unbounded operator on $L^2(M_0)$ with domain $D(A)=H^2(M_0)\cap H^1_0(M_0)$. It is well known that the spectrum of $A$ consist of a non decreasing sequence of positive eigenvalues $(\lambda_n)_{n\geq1}$ associated to an Hilbertian basis of eigenfunctions $(\phi_n)_{n\geq1}$.  Extending $F$ to $(0,T)\times(-R,R)\times M_0$, fixing $n\geq1$  and projecting equation \eqref{eqGO3} on the space spanned by $\phi_n$, we obtain
\begin{equation}\label{eqGO4} P_{n,-\sigma}y=F_n\ee
with $F_n(t,x_1)=\left\langle F(t,x_1,\cdot),\phi_n\right\rangle_{L^2(M_0)}$ and $$P_{n,-\sigma}=\partial_t^2-\partial_{x_1}^2-2\sigma(\beta\pd_t-\pd_{x_1})-(1-\beta^2)\sigma^2+\lambda_n.$$
We set also $p_{n,-\sigma}(\mu,\eta)=-\mu^2+\eta^2-2i\sigma(\beta\mu-\eta)-(1-\beta^2)\sigma^2+\lambda_n$, $\mu\in\R$, $\eta\in\R$, such that, for $D_t=-i\partial_t$, $D_{x_1}=-i\pd_{x_1}$, we have $p_{n,-\sigma}(D_t,D_{x_1})=P_{n,-\sigma}$. Applying some  results of \cite{Ch,Ho2,Ki2,Ki3} about solutions of  PDEs with constant coefficients we obtain the following.

\begin{lem}\label{l1} For every $\sigma>1$ and $n\geq1$  there exists a bounded operator $$E_{n,\sigma}:\ L^2((0,T)\times(-R,R))\to L^2((0,T)\times(-R,R))$$ such that:
\begin{equation}\label{l1a}P_{n,-\sigma} E_{n,\sigma}F=F,\quad F\in L^2((0,T)\times(-R,R)),\end{equation}
\begin{equation}\label{l1b} \norm{E_{n,\sigma}}_{\mathcal B(L^2((0,T)\times(-R,R)))}\leq C\sigma^{-1},\end{equation}
\begin{equation}\label{l1c} E_{n,\sigma}\in \mathcal B(L^2((0,T)\times(-R,R));H^1((0,T)\times(-R,R)))\ee and
\bel{l1d}\quad \norm{E_{n,\sigma}}_{\mathcal B(L^2((0,T)\times(-R,R));H^1((0,T)\times(-R,R)))}+\norm{\sqrt{\lambda_n}E_{n,\sigma}}_{\mathcal B(L^2((0,T)\times(-R,R))}\leq C\end{equation}
with $C>$ depending only on  $T$ and $R$.\end{lem}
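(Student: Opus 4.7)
The plan is to build $E_{n,\sigma}$ as the restriction to $(0,T)\times(-R,R)$ of a right inverse of the constant-coefficient operator $P_{n,-\sigma}(D_t,D_{x_1})$ on $\R^2$, via the Malgrange--H\"ormander fundamental solution construction used in \cite{Ch,Ho2,Ki2,Ki3}. Given $F\in L^2((0,T)\times(-R,R))$, I would extend it by zero to a function supported in $K=[0,T]\times[-R,R]$ and define $E_{n,\sigma}F$ as the inverse Fourier--Laplace transform of $\widehat{F}(\tau+\zeta_1,\xi+\zeta_2)/p_{n,-\sigma}(\tau+\zeta_1,\xi+\zeta_2)$, integrated over $(\tau,\xi)\in\R^2$, where $\zeta\in\C^2$ is a bounded complex shift to be chosen. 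The identity $P_{n,-\sigma}(D)E_{n,\sigma}F=F$ on $\R^2$, and hence \eqref{l1a} after restriction, then follows from standard Fourier--Laplace calculus once the contour is admissible.

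The shift $\zeta$ will be supplied by H\"ormander's inequality (cf.\ \cite[Theorem~10.2.1]{Ho2}): for any polynomial $p$ of degree $m$ in $n$ variables there exist $c_0,C_0>0$ depending only on $(m,n)$ and a $\zeta\in\C^n$ with $|\zeta|\leq C_0$ so that $|p(\eta+\zeta)|\geq c_0\tilde{p}(\eta)$ for every real $\eta$, where $\tilde{p}(\eta)^2=\sum_\alpha|p^{(\alpha)}(\eta)|^2$. Applied to the degree-two polynomial $p_{n,-\sigma}$ in two variables, these constants are uniform in $\sigma$ and $n$. A Plancherel argument, with $|\zeta|\leq C_0$ absorbing the exponential factor $e^{2(T+R)C_0}$ coming from the contour shift, will yield
\[
\norm{E_{n,\sigma}F}_{L^2(K)}\leq C\,\norm{p_{n,-\sigma}(\cdot+\zeta_1,\cdot+\zeta_2)^{-1}}_{L^\infty(\R^2)}\norm{F}_{L^2(K)},
\]
together with the analogous inequalities in which the numerator on the right is replaced by $\tau+\zeta_1$, $\xi+\zeta_2$, or $\sqrt{\lambda_n}$. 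Thus \eqref{l1b}--\eqref{l1d} all reduce to pointwise lower bounds on $\tilde{p}_{n,-\sigma}$ of the form $\tilde{p}\gtrsim\sigma$, $\tilde{p}\gtrsim\sqrt{\lambda_n}$, and upper bounds $|\tau+\zeta_1|,|\xi+\zeta_2|\lesssim\tilde{p}+1$.

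These pointwise bounds will come from direct differentiation: since $\partial_\xi p_{n,-\sigma}=2(\xi+i\sigma)$ and $\partial_\tau p_{n,-\sigma}=-2(\tau+i\sigma\beta)$, one gets
\[
\tilde{p}_{n,-\sigma}(\tau,\xi)\geq 2\sqrt{\xi^2+\sigma^2}\geq 2\sigma,\qquad \tilde{p}_{n,-\sigma}(\tau,\xi)\geq 2\sqrt{\tau^2+\sigma^2\beta^2}\geq\sigma,
\]
using $\beta\geq 1/2$, and simultaneously $|\xi+\zeta_2|,|\tau+\zeta_1|\lesssim \tilde{p}+1$, which handles \eqref{l1b} and the $H^1$ half of \eqref{l1d}. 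The hard step will be the $\sqrt{\lambda_n}$ bound in \eqref{l1d}, which must remain uniform as $\lambda_n\to\infty$ independently of $\sigma$. I plan to handle it by a dichotomy: when $\lambda_n\leq C_1\sigma^2$ the bound follows from $\sqrt{\lambda_n}\lesssim\sigma\lesssim\tilde{p}$; when $\lambda_n>C_1\sigma^2$ with $C_1$ large, $\lambda_n$ dominates the constant term of $p$, and a case split on the size of $\mathrm{Re}\,p$ shows that either $|p|\geq c\lambda_n$ directly, or $|\tau^2-\xi^2|\gtrsim\lambda_n$ forces one of $|\tau|,|\xi|$ to be of order $\sqrt{\lambda_n}$, so that the corresponding first derivative of $p$ supplies $\tilde{p}\geq c\sqrt{\lambda_n}$. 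The main technical obstacle lies precisely in tracking the constants through this last case analysis and confirming they remain independent of both $\sigma$ and $n$.
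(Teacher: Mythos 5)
Your proposal follows essentially the same route as the paper: the paper invokes the Malgrange--H\"ormander fundamental-solution construction as a black box (citing Theorem 2.3 of Choulli's book and Theorem 10.3.7 of H\"ormander, which give exactly the bound $\norm{Q(D)E_{n,\sigma}}\leq C\sup |Q|/\tilde p_{n,-\sigma}$ you re-derive from the complex contour shift), and then verifies the same pointwise lower bounds $\tilde p_{n,-\sigma}\geq 2\sigma$, $\tilde p_{n,-\sigma}\geq 2|\mu|$, $\tilde p_{n,-\sigma}\geq 2|\eta|$, together with a dichotomy in $\lambda_n$ versus $(|\mu|+\sigma)^2$ for the $\sqrt{\lambda_n}$ estimate that is equivalent to your case split on $\mathrm{Re}\,p$. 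Your constant-tracking concern is unproblematic: all constants come from H\"ormander's inequality for degree-two polynomials in two variables and from the size of $[0,T]\times[-R,R]$, hence depend only on $T$ and $R$, exactly as the paper asserts.
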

\begin{proof} In light of \cite[Thoerem 2.3]{Ch} (see also \cite[Theorem 10.3.7]{Ho2}), there exists a bounded operator $E_{n,\sigma}\in \mathcal B( L^2((0,T)\times(-R,R)))$, defined from a fundamental solutions associated to $P_{n,-\sigma}$ (see Section 10.3 of \cite{Ho2}),  such that \eqref{l1a} is fulfilled. In addition, fixing 
\[\tilde{p}_{n,-\sigma}(\mu,\eta):=\left(\sum_{k\in\mathbb N}\sum_{\alpha\in\mathbb N}|\partial^k_\mu\partial^\alpha_\eta p_{n,-\sigma}(\mu,\eta)|^2\right)^{{1\over2}},\quad \mu\in\R,\ \eta\in\R,\]
for all differential operator  $Q(D_t,D_{x_1})$ with ${Q(\mu,\eta)\over \tilde {p}_{n,-\sigma}(\mu,\eta)}$ a bounded function, we have $Q(D_t,D_{x_1})E_{n,\sigma}\in\mathcal B(L^2((0,T)\times(-R,R)))$ and there exists  a constant $C$ depending only on $R$, $T$ such that
\begin{equation}\label{l1e}\norm{Q(D_t,D_x)E_{\sigma}}_{\mathcal B(L^2((0,T)\times(-R,R)))}\leq C\sup_{(\mu,\eta)\in\R^{2}}{|Q(\mu,\eta)|\over \tilde {p}_{n,-\sigma}(\mu,\eta)}.\end{equation}
Note that $\tilde{p}_{n,-\sigma}(\mu,\eta)\geq \abs{\im \partial_\eta p_{n,-\sigma}(\mu,\eta)}=2\sigma$. Therefore, \eqref{l1e} implies
\[\norm{E_{\sigma}}_{\mathcal B(L^2((0,T)\times(-R,R)))}\leq C\sup_{(\mu,\eta)\in\R^{2}}{1\over \tilde {p}_{n,-\sigma}(\mu,\eta)}\leq C\sigma^{-1}\]
and \eqref{l1b} is fulfilled. In a same way, we have $\tilde{p}_{-\sigma}(\mu,\eta)\geq \abs{\re \partial_\mu p_{-\sigma}(\mu,\eta)}=2|\mu|$ and $\tilde{p}_{n,-\sigma}(\mu,\eta)\geq \abs{\re \partial_{\eta} p_{n,-\sigma}(\mu,\eta)}=2|\eta|$. Therefore, in view of  \cite[Theorem 2.3]{Ch}, we have \eqref{l1c} with
\[\norm{E_{n,-\sigma}}_{\mathcal B(L^2((0,T)\times(-R,R));H^1((0,T)\times(-R,R)))}\leq C\sup_{(\mu,\eta)\in\R^{2}}{|\mu|+|\eta|\over \tilde {p}_{n,-\sigma}(\mu,\eta)}+C\sigma^{-1}\leq 3C\]
and the first inequality of \eqref{l1d} is proved. For the last inequality of \eqref{l1d}, let us consider the two cases $\lambda_n\geq 2(|\mu|+\sigma)^2$ and $\lambda_n< 2(|\mu|+\sigma)^2$. For $\lambda_n\geq2(|\mu|+\sigma)^2$, we have
\bel{l1f}\tilde{p}_{n,-\sigma}(\mu,\eta) \geq \abs{\re  p_{n,-\sigma}(\mu,\eta)}\geq -\mu^2-(1-\beta^2)\sigma^2+\lambda_n\geq \lambda_n-(|\mu|+\sigma)^2={\lambda_n\over2}\geq c\sqrt{\lambda_n}\ee
with $c>0$ independent of $n$. For $\lambda_n<2(|\mu|+\sigma)^2$, we have $|\mu|>{\sqrt{\lambda_n}\over\sqrt{2}}-\sigma$ and we get
\bel{l1g}\tilde{p}_{n,-\sigma}(\mu,\eta) \geq {(\abs{\re  \pd_\mu p_{n,-\sigma}(\mu,\eta)}+\abs{\im  \pd_\eta p_{n,-\sigma}(\mu,\eta)})\over2}\geq |\mu|+\sigma\geq {\sqrt{\lambda_n}\over\sqrt{2}}.\ee
Combining \eqref{l1e}-\eqref{l1g}, we deduce the second inequality of \eqref{l1d}.\end{proof}
Applying this lemma, we can now consider solutions of \eqref{eqGO3} given by the following result.
\begin{lem}\label{l2} For every $\sigma>1$ and $n\geq1$  there exists a bounded operator $$E_{\sigma}:\ L^2((0,T)\times(-R,R))\times M_0)\to L^2((0,T)\times(-R,R)\times M_0)$$ such that:
\begin{equation}\label{l2a}P_{-\sigma} E_{\sigma}F=F,\quad F\in L^2((0,T)\times(-R,R)\times M_0),\end{equation}
\begin{equation}\label{l2b} \norm{E_{\sigma}}_{\mathcal B(L^2((0,T)\times(-R,R)\times M_0))}\leq C\sigma^{-1},\end{equation}
\begin{equation}\label{l2c} E_{\sigma}\in \mathcal B(L^2((0,T)\times(-R,R)\times M_0);H^1((0,T)\times(-R,R)\times M_0))\ee and
\bel{l2d}\quad \norm{E_{\sigma}}_{\mathcal B(L^2((0,T)\times(-R,R)\times M_0);H^1((0,T)\times(-R,R)\times M_0))}\leq C\end{equation}
with $C>$ depending only on  $T$, $R$ and $M_0$.\end{lem}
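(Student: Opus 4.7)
The plan is to build $E_\sigma$ by a separation of variables argument using the eigenbasis $(\phi_n)_{n\geq 1}$ of $A=-\Delta_{g'}$ on $M_0$, and then invoke Lemma \ref{l1} mode-by-mode. Given $F\in L^2((0,T)\times(-R,R)\times M_0)$, I expand
\[
F(t,x_1,x')=\sum_{n\geq 1}F_n(t,x_1)\phi_n(x'),\qquad F_n(t,x_1)=\langle F(t,x_1,\cdot),\phi_n\rangle_{L^2(M_0)},
\]
apply Lemma \ref{l1} to obtain $y_n=E_{n,\sigma}F_n\in L^2((0,T)\times(-R,R))$, and define
\[
E_\sigma F(t,x_1,x')=\sum_{n\geq 1}y_n(t,x_1)\phi_n(x').
\]
Since $-\Delta_{g'}(y_n\phi_n)=\lambda_n y_n\phi_n$, the operator $P_{-\sigma}$ acts diagonally: $P_{-\sigma}(y_n\phi_n)=(P_{n,-\sigma}y_n)\phi_n=F_n\phi_n$. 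Hence $P_{-\sigma}E_\sigma F=F$ in the sense of distributions, once convergence of the series is established.

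For the bounds, I use Parseval in $x'$ together with the estimates of Lemma \ref{l1}. For \eqref{l2b}, \eqref{l1b} gives
\[
\|E_\sigma F\|_{L^2((0,T)\times(-R,R)\times M_0)}^2=\sum_n\|y_n\|^2_{L^2((0,T)\times(-R,R))}\leq C\sigma^{-2}\sum_n\|F_n\|^2_{L^2}=C\sigma^{-2}\|F\|_{L^2}^2.
\]
For the $H^1$ bound, the norm on the product splits into tangential and transverse parts. The $t,x_1$-derivatives are controlled by the first term in \eqref{l1d}, and the $x'$-derivatives via
\[
\|\nabla_{g'}(E_\sigma F)\|^2_{L^2}=\sum_n\lambda_n\|y_n\|^2_{L^2((0,T)\times(-R,R))}=\sum_n\|\sqrt{\lambda_n}E_{n,\sigma}F_n\|^2_{L^2},
\]
and the second term in \eqref{l1d} bounds each summand by $C\|F_n\|^2_{L^2}$. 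Summing yields
\[
\|E_\sigma F\|^2_{H^1((0,T)\times(-R,R)\times M_0)}\leq C\sum_n\|F_n\|^2_{L^2}=C\|F\|^2_{L^2},
\]
which gives \eqref{l2c} and \eqref{l2d}.

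The only mild technical obstacle is justifying that the partial sums $\sum_{n\leq N}y_n\phi_n$ form a Cauchy sequence in $L^2$ (resp.\ $H^1$), so that $E_\sigma F$ is well defined and the identity $P_{-\sigma}E_\sigma F=F$ holds. This follows from the bounds above applied to the tails $\sum_{n>N}F_n\phi_n$ together with the density of finite linear combinations of the $\phi_n$ in $L^2(M_0)$; once Cauchy in $H^1$ is verified, $P_{-\sigma}(E_\sigma F)=F$ can be read off by testing against smooth compactly supported functions, since each partial sum already solves the truncated equation. The constants are independent of $\sigma$ and $F$, and depend only on $T$, $R$ and (through $(\lambda_n,\phi_n)$) on $M_0$, as required.
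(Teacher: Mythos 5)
Your proposal is correct and follows essentially the same route as the paper: expand $F$ in the eigenbasis of $-\Delta_{g'}$, define $E_\sigma F=\sum_n (E_{n,\sigma}F_n)\phi_n$, and derive \eqref{l2b}--\eqref{l2d} from Lemma \ref{l1} via Parseval, with the $x'$-regularity obtained through the $\sqrt{\lambda_n}$-bound and the identification $L^2(\cdot;D(A^{1/2}))=L^2(\cdot;H^1_0(M_0))$. Your extra remark on the convergence of the partial sums is a harmless (and correct) elaboration of a point the paper leaves implicit.
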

\begin{proof}
According to Lemma \ref{l1}, we can define $E_\sigma$ on $L^2((0,T)\times(-R,R)\times M_0)$  by
\[E_{\sigma}F:=\sum_{n=1}^\infty (E_{n,\sigma}F_n)\phi_n,\quad F_n(t,x_1)=\left\langle F(t,x_1,\cdot),\phi_n\right\rangle_{L^2(M_0)},\ (t,x_1)\in\times(0,T)\times(-R,R).\]
It is clear that \eqref{l1a} implies \eqref{l2a}. Moreover, we have
\[\norm{E_{\sigma}F}^2_{L^2((0,T)\times(-R,R)\times M_0)}=\sum_{n=1}^\infty\norm{E_{n,\sigma}F_n}^2_{L^2((0,T)\times(-R,R))}\]
and  from \eqref{l1b} we get 
\[\norm{E_{\sigma}F}^2_{L^2((0,T)\times(-R,R)\times M_0)}\leq C^2\sigma^{-2}\sum_{n=1}^\infty\norm{F_n}^2_{L^2((0,T)\times(-R,R))}=C^2\sigma^{-2}\norm{F}^2_{L^2((0,T)\times(-R,R)\times M_0)}.\]
From this estimate we deduce \eqref{l2b}. In view of \eqref{l1c}-\eqref{l1d}, we have $E_{\sigma}\in \mathcal B(L^2((0,T)\times(-R,R)\times M_0);H^1((0,T)\times(-R,R); L^2( M_0)))$ and, for all $F\in L^2((0,T)\times(-R,R)\times M_0)$,  we have
\bel{l2e}\begin{aligned}\norm{E_{\sigma}F}_{ H^1((0,T)\times(-R,R); L^2( M_0))}^2&=\sum_{n=1}^{\infty}\norm{E_{n,\sigma}F_n}_{ H^1((0,T)\times(-R,R)}^2\\
\ &\leq C^2\sum_{n=1}^\infty\norm{F_n}^2_{L^2((0,T)\times(-R,R))}=C^2\norm{F}^2_{L^2((0,T)\times(-R,R)\times M_0)}.\end{aligned}\ee
In the same way according to \eqref{l1d}, for all $F\in L^2((0,T)\times(-R,R)\times M_0)$, we have
\[\sum_{n=1}^\infty \lambda_n\norm{\left\langle E_{\sigma}F,\phi_n\right\rangle_{L^2(M_0)}}_{L^2((0,T)\times(-R,R))}^2\leq C^2\sum_{n=1}^\infty\norm{F_n}^2_{L^2((0,T)\times(-R,R))}=C^2\norm{F}^2_{L^2((0,T)\times(-R,R)\times M_0)}.\]
Therefore, we have $E_{\sigma}F\in L^2((0,T)\times(-R,R); D(A^{1/2}))=L^2((0,T)\times(-R,R);H^1_0(M_0))$ and we get
\[\begin{aligned}\norm{E_{\sigma}F}_{L^2((0,T)\times(-R,R);H^1( M_0))}^2&\leq C'\sum_{n=1}^{\infty}\lambda_n\norm{E_{n,\sigma}F_n}_{L^2((0,T)\times(-R,R))}^2\\
\ &\leq C'C^2\sum_{n=1}^\infty\norm{F_n}^2_{L^2((0,T)\times(-R,R))}\\
\ &\leq C'C^2\norm{F}^2_{L^2((0,T)\times(-R,R)\times M_0)}.\end{aligned}\]
Combining this estimate with \eqref{l2e} we deduce \eqref{l2c}-\eqref{l2d}.\end{proof}

Applying this result, we can build  geometric optics solutions of the form \eqref{GO1}.
  \begin{prop}\label{p2} Let $q\in L^\infty((0,T)\times (-R,R)\times M_0)$.  Then, there exists $\sigma_0>1$ such that for $\sigma\geq \sigma_0$ the equation $\Box_{a,g} u + q u = 0$ admits a solution $u\in H^1((0,T)\times (-R,R)\times M_0)$ of the form \eqref{GO1} with
\begin{equation}\label{p2a}\norm{w}_{H^k((0,T)\times (-R,R)\times M_0)}\leq C\sigma^{k-1},\quad k=0,1,\end{equation}
where $C$ and $\sigma_0$ depend on  $M_0$, $T$, $ \norm{q}_{L^{\infty}((0,T)\times (-R,R)\times M_0)}$.
\end{prop}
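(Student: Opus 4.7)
The plan is to use the reduction \eqref{red} to normalize the principal part and then build $w$ as a fixed point of a map whose contraction constant is small thanks to the gain $\sigma^{-1}$ provided by Lemma \ref{l2}. I would begin by extending $a$ smoothly to a positive function on $(-R,R)\times M_0$ (still denoted $a$) and $q$ by zero, so that $q_a=aq+a^{(d+2)/4}\Delta_g(a^{-(d-2)/4})$ is a bounded function on $(0,T)\times(-R,R)\times M_0$. By \eqref{red}, setting $u=a^{-(d-2)/4}v$ reduces the equation $\Box_{a,g}u+qu=0$ to $(\partial_t^2-\Delta_{e\oplus g'}+q_a)v=0$, so it suffices to construct $v$ of the form \eqref{GO1} with $k$ as in \eqref{GO2} and a remainder $w$ satisfying the estimates claimed for the amplitude part of $u$ (the prefactor $a^{-(d-2)/4}$ is harmless). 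A direct substitution shows that $w$ must solve \eqref{eqGO2}, i.e.\ $P_{-\sigma}w=-q_aw-(P_{-\sigma}+q_a)k$.

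The central step is to show that $F_\sigma:=-(P_{-\sigma}+q_a)k$ is bounded in $L^2$ uniformly in $\sigma$. For this I would exploit, in combination, the two special features of the ansatz: the phase $e^{i\sigma\sqrt{1-\beta^2}\,r}$ built so that the gradient of $\beta t+x_1+i\sqrt{1-\beta^2}\,r$ is isotropic with respect to the symbol of $\Box_{e\oplus g'}$, and the amplitude factor $b(r,\theta)^{-1/4}$ chosen, exactly as in Section 2, so that the first-order transport term on the radial direction cancels. More precisely, $\partial_t k=-i\mu k$ and $\partial_{x_1}k=-i\mu\beta k$ give $(\beta\partial_t-\partial_{x_1})k\equiv 0$, which kills the first-order Carleman term in $P_{-\sigma}$. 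In polar normal coordinates on $(D,g')$ one has $\Delta_{g'}f=b^{-1/2}\partial_r(b^{1/2}\partial_r f)+\Delta_\theta f$, and a direct computation using $\partial_r(b^{-1/4})=-\frac{\partial_r b}{4b}b^{-1/4}$ yields $\Delta_{g'}k=-\sigma^2(1-\beta^2)k+G(r,\theta)e^{-i\mu(t+\beta x_1)}$ with $G$ independent of $\sigma$ (and depending on $h$ only through $h$ and its first two angular derivatives). Combining with $\partial_t^2k-\partial_{x_1}^2k=-\mu^2(1-\beta^2)k$ and $P_{-\sigma}=\Box_{e\oplus g'}-2\sigma(\beta\partial_t-\partial_{x_1})-\sigma^2(1-\beta^2)$, the $\sigma^2$-contributions cancel and one gets $\|P_{-\sigma}k\|_{L^2}\leq C$ uniformly in $\sigma$. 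Adding $q_ak$, we obtain $\|F_\sigma\|_{L^2}\leq C$.

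With $F_\sigma$ controlled, I would rewrite the equation for $w$ as the fixed-point problem $w=T(w):=E_\sigma(F_\sigma)-E_\sigma(q_aw)$, where $E_\sigma$ is the right inverse of $P_{-\sigma}$ furnished by Lemma \ref{l2}. Using \eqref{l2b},
\[\norm{T(w_1)-T(w_2)}_{L^2}\leq \norm{E_\sigma}_{\mathcal{B}(L^2)}\norm{q_a}_{L^\infty}\norm{w_1-w_2}_{L^2}\leq C\sigma^{-1}\norm{q_a}_{L^\infty}\norm{w_1-w_2}_{L^2},\]
so choosing $\sigma_0>2C\norm{q_a}_{L^\infty}$ makes $T$ a strict contraction on $L^2((0,T)\times(-R,R)\times M_0)$ for $\sigma\geq\sigma_0$. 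Banach's theorem then yields a unique fixed point with the bound $\norm{w}_{L^2}\leq 2\norm{E_\sigma F_\sigma}_{L^2}\leq C\sigma^{-1}$, which is the case $k=0$ of \eqref{p2a}. Finally, applying \eqref{l2d} to the identity $w=E_\sigma(F_\sigma-q_aw)$ gives $\norm{w}_{H^1}\leq C(\norm{F_\sigma}_{L^2}+\norm{q_a}_{L^\infty}\norm{w}_{L^2})\leq C$, which is the case $k=1$.

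The main obstacle I anticipate is the bookkeeping in the computation of $P_{-\sigma}k$: one has to verify carefully that with the prescribed choices of phase and amplitude, the naive $O(\sigma^2)$ and $O(\sigma)$ contributions all cancel and only genuinely bounded error terms (depending on $b$, its derivatives, and two derivatives of $h$) remain. Once this eikonal/transport cancellation is in hand, the remainder of the argument is an essentially automatic contraction-mapping argument powered by the $\sigma^{-1}$ gain of the solution operator $E_\sigma$.
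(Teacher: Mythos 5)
Your proof is correct and follows essentially the same route as the paper: the reduction via \eqref{red}, the verification that the eikonal and transport cancellations make $e^{\sigma(\beta t+x_1)}\Box_{e\oplus g'}(e^{-\sigma(\beta t+x_1)}k)$ bounded in $L^2$ uniformly in $\sigma$, and a contraction-mapping argument driven by the $\sigma^{-1}$ gain of $E_\sigma$ from Lemma \ref{l2}, with \eqref{l2d} giving the $H^1$ bound. The only cosmetic difference is that you split the first-order cancellation into the $(\beta\partial_t-\partial_{x_1})$ part and the radial $b^{-1/4}$ part, whereas the paper groups them into a single transport identity; the content is identical.
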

\begin{proof} We start by recalling that, in view of \eqref{GO2}, in the polar normal coordinate $x'=\exp(r\theta)$ the function $\Box_{g}(e^{-\sigma(\beta t+x_1)}k)$ will be given by
\[\begin{array}{l}\Box_g(e^{-\sigma(\beta t+x_1)}k)(t,x_1,\exp_y(r\theta))=
\sigma^2(\beta^2-1+1-\beta^2)e^{-\sigma(\beta t+x_1)} k\\
+2\sigma e^{-\sigma(\beta t+x_1-i(1-\beta^2)^{1\over2} r)}(-\beta\partial_t+\pd_{x_1}-i\sqrt{1-\beta^2}\pd_r-i\sqrt{1-\beta^2}{\pd_rb\over 4b})e^{-i\mu(t+\beta x_1)}b(r,\theta)^{-1/4}h(\theta)\\
+e^{-\sigma(\beta t+x_1-i(1-\beta^2)^{1\over2} r)}\Box_{e\oplus\tilde{g'}}e^{-i\mu(t+\beta x_1)}b(r,\theta)^{-1/4}h(\theta).\end{array}\]
Using the definition of $ k$ in polar normal coordinate we deduce that
\[(-\beta \partial_t+\pd_{x_1}-i\sqrt{1-\beta^2}\pd_r-i\sqrt{1-\beta^2}{\pd_rb\over 4b})e^{-i\mu(t+\beta x_1)}b(r,\theta)^{-1/4}h(\theta)=0\]
and
\[e^{\sigma(\beta t+x_1)}\Box_g(e^{-\sigma(\beta t+x_1)}k)(t,x_1,\exp_y(r\theta))=e^{i\sigma(\sqrt{1-\beta^2}) r}\Box_{e\oplus\tilde{g'}}e^{-i\mu(t+\beta x_1)}b(r,\theta)^{-1/4}h(\theta).\]
Thus,  there exists $C>0$ independent of $\sigma$ such that
\bel{p2b}\norm{e^{\sigma(\beta t+x_1)}\Box_g(e^{-\sigma(\beta t+x_1)}k)}_{L^2((0,T)\times (-R,R)\times M_0)}\leq C.\ee
According to Lemma \ref{l2}, we can rewrite equation \eqref{eqGO2} as $$w=-E_{\sigma}\left(e^{\sigma(\beta t+x_1)}\Box_g(e^{-\sigma(\beta t+x_1)}k)+qw\right),\quad w\in L^2((0,T)\times (-R,R)\times M_0)$$ with $E_{\sigma}\in\mathcal B(L^2((0,T)\times (-R,R)\times M_0))$ given by Lemma \ref{l2}.
For this purpose, we will use a standard fixed point argument associated to the map
\begin{align*}
\mathcal G:  L^2((0,T)\times (-R,R)\times M_0) & \to L^2((0,T)\times (-R,R)\times M_0), 
\\ 
F &\mapsto -E_{\sigma}\left[e^{\sigma(\beta t+x_1)}\Box_g(e^{-\sigma(\beta t+x_1)}k)+qF\right].
\end{align*}
Indeed, in view of \eqref{l2b}, fixing $R_1>0$ , there  exists $\sigma_0>1$ such that for $\sigma\geq \sigma_0$ the map $\mathcal G$ admits a unique fixed point $w$ in $\{u\in L^2((0,T)\times (-R,R)\times M_0): \norm{u}_{L^2((0,T)\times (-R,R)\times M_0)}\leq R_1\}$. In addition, condition \eqref{l2b}-\eqref{l2d} imply that $w\in H^1((0,T)\times (-R,R)\times M_0)$ fulfills \eqref{p2a}. This completes the proof.
\end{proof}

\subsection{Exponentially growing solutions vanishing on parts of the boundary}
Let us first remark that repeating the arguments of the previous section we can build solutions
\bel{CGO1a}
u(t,x)=e^{\sigma (\beta t+ x_1)}\left( a^{-{d-2\over4}}l(x)+z(t,x) \right),\quad (t,x)\in (0,T)\times M
\ee
of the equation $a^{-1}\partial_t^2u-\Delta_gu +qu=0$ in $(0,T)\times M$. On the other hand, since the construction of the previous section consists of extending the domain  and considering restriction of the solutions of our wave equation on the extended domain, we will have no control on the traces of the solutions on $\pd\overline{M}$ (with $\overline{M}=[0,T]\times M$). On the other hand, according to \cite{KSU,Ki2,Ki3}, one can use Carleman estimates to construct by duality such solutions vanishing on some parts of $\pd\overline{M}$. Following this idea, in this section we will construct solutions of the form \eqref{CGO1a} satisfying 
$$u(t,x)=0,\quad  (t,x)\in(\{0\}\times M)\cup U.$$
 From now on, for  all  $\delta>0$, we set
\[\partial M_{+,\delta,\pm}=\{x\in\partial M:\ \pm\pd_\nu\phi(x)>\delta\},\quad\partial M_{-,\delta,\pm}=\{x\in\partial M:\ \pm\pd_\nu\phi(x)\leq \delta\}\]
and $\Sigma_{\pm,\delta,\pm}=(0,T)\times \partial M_{\pm,\delta,\pm}$.  Without loss of generality we  assume that there exists $0<\epsilon<1$ such that  $\partial M_{-,\epsilon,-}\subset U'$.
The goal of this section is to use the Carleman estimate \eqref{c1a} in order to build  solutions $u\in H_{\Box_{a,g}}((0,T)\times M)$  of the form \eqref{CGO1a} to 
\begin{equation}
\label{(5.1)}
\left\{
\begin{array}{l}
(a^{-1}\partial_t^2-\Delta_g +q(t,x))u=0\ \ \textrm{in }  (0,T)\times M,
\\
u_{\vert t=0}=0,
\\
u=0,\quad \ \textrm{on } (0,T)\times\pd M_{+,\epsilon/2,-}.
\end{array}
\right.
\end{equation}
Here $l\in C^\infty([-R,R]\times M_0)$ is defined in polar normal coordinate associated to $y\in\pd D$ by
\bel{GGO}l(x_1,\exp_y(r\theta))=e^{-i\sigma(\sqrt{1-\beta^2}) r}b(r,\theta)^{-1/4}.\ee
Moreover  $z \in e^{-\sigma (\beta t+x_1)}H_{\Box_{a,g}}((0,T)\times M)$ fulfills: $z(0,x)=-a^{-{d-2\over4}}l(x)$ , $x\in M$, $z=-a^{-{d-2\over4}}l$ on $(0,T)\times\pd M_{+,\epsilon/2,-}$ and
\bel{CGO1b}
\| z \|_{L^2((0,T)\times M)}\leq C\sigma^{-\frac{1}{2}}
\ee
with $C$ depending on $U'$, $M$, $T$ and any $M\geq\norm{q}_{L^\infty((0,T)\times M)}$. Since $(0,T)\times \pd M\setminus U\subset(0,T)\times\pd M\setminus \Sigma_{-,\epsilon,-}=\Sigma_{+,\epsilon,-}$ and since $\Sigma_{+,\epsilon/2,-}$ is a neighborhood of $\Sigma_{+,\epsilon,-}$ in $(0,T)\times \pd M$,  it is clear that condition \eqref{(5.1)} implies $(u_{|(0,T)\times \pd M},\partial_t u_{|t=0})\in\mathcal{B}_U$.

The main result of this section can be stated as follows.

\begin{thm}\label{t3} Let $q\in L^\infty((0,T)\times M)$. For all $\sigma\geq \sigma_1$, with $\sigma_1$ the constant of Theorem \ref{c1},  there exists a solution $u\in H_{\Box_{a,g}}((0,T)\times M)$ of \eqref{(5.1)} of the form \eqref{CGO1a} with $z$ satisfying \eqref{CGO1b}. \end{thm}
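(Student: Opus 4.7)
The construction follows the Hahn--Banach duality scheme pioneered in \cite{KSU} for the elliptic Calder\'on problem and adapted to time-dependent wave equations in \cite{Ki2,Ki3}: the plan is to apply the Carleman estimate of Theorem~\ref{c1} to the adjoint operator on a carefully chosen test-function space, and then represent the desired $u$ by Riesz.

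After the reduction \eqref{red} I may assume $a\equiv 1$ and replace $q$ by $q_a$. Set $\psi(t,x):=\beta t+x_1$ and $U_0(t,x):=e^{\sigma\psi(t,x)}a^{-(d-2)/4}l(x)$, with $l$ given by \eqref{GGO}. The factor $e^{-i\sigma\sqrt{1-\beta^2}\,r}$ in $l$ solves the eikonal equation for the phase $\sigma\psi$, while $b(r,\theta)^{-1/4}$ solves the transport equation in the radial direction, so the formal $\sigma^2$- and $\sigma$-terms in $e^{-\sigma\psi}(\Box_{a,g}+q)U_0$ cancel. Consequently $F_\sigma := e^{-\sigma\psi}(\Box_{a,g}+q)U_0$ is bounded in $L^2((0,T)\times M)$ uniformly in $\sigma$. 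The theorem reduces to producing $v\in L^2((0,T)\times M)$ with $\|v\|_{L^2}\leq C\sigma^{-1/2}$ such that $u:=U_0+e^{\sigma\psi}v\in H_{\Box_{a,g}}((0,T)\times M)$ solves $(\Box_{a,g}+q)u=0$ weakly and carries the (distributional) traces $u|_{t=0}=0$ and $u|_{\Sigma_{+,\epsilon/2,-}}=0$ from Section~3.

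Take as test space
\[
V := \{w\in C^2([0,T]\times \overline M):\ w|_{(0,T)\times\partial M}=0,\ w|_{t=0}=0,\ w|_{t=T}=\partial_t w|_{t=T}=0\}.
\]
The extra condition $w|_{t=0}\equiv 0$ forces $\nabla_g w(0,\cdot)\equiv 0$, killing the two initial-data pieces $\sigma^3\!\int_M e^{2\sigma\psi_+}|w(0)|^2$ and $\sigma\!\int_M e^{2\sigma\psi_+}|\nabla_g w(0)|^2$ on the right-hand side of \eqref{c1a} (in its $+$ form). Integration by parts converts the requirements on $u$ into the single identity, valid for every $w\in V$,
\[
\int_0^T\!\!\!\int_M v\,e^{\sigma\psi}\,\overline{(\Box_{a,g}+\bar q)w}\,dV_g\,dt \;+\; (\text{boundary pieces in the unknown } v) \;=\; \mathcal F(w),
\]
where the source functional is
\begin{align*}
\mathcal F(w) := &-\int_0^T\!\!\!\int_M e^{\sigma\psi}F_\sigma\,\bar w\,dV_g\,dt + \int_M a^{-1}e^{\sigma x_1}a^{-(d-2)/4}l\,\partial_t\bar w(0)\,dV_g \\
&+ \int_{\Sigma_{+,\epsilon/2,-}}e^{\sigma\psi}a^{-(d-2)/4}l\,\partial_\nu\bar w\,d\sigma_g\,dt.
\end{align*}
Writing $N(w)^2:=\|e^{\sigma\psi}(\Box_{a,g}+\bar q)w\|_{L^2((0,T)\times M)}^2+\sigma\|e^{\sigma\psi}\sqrt{|\partial_\nu\phi|}\,\partial_\nu w\|_{L^2(\Sigma_+)}^2$, the Carleman inequality \eqref{c1a} gives, for $\sigma\geq\sigma_1$ and $w\in V$,
\[
\sigma^2\|e^{\sigma\psi}w\|_{L^2((0,T)\times M)}^2+\sigma\|e^{\sigma x_1}\partial_t w(0)\|_{L^2(M)}^2+\sigma\|e^{\sigma\psi}\sqrt{|\partial_\nu\phi|}\,\partial_\nu w\|_{L^2(\Sigma_-)}^2\;\leq\; C\,N(w)^2.
\]

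Applying Cauchy--Schwarz slot by slot to the three summands of $\mathcal F$, and using $\Sigma_{+,\epsilon/2,-}\subset\Sigma_-$ together with $|\partial_\nu\phi|\geq\epsilon/2$ there, one obtains $|\mathcal F(w)|\leq C\sigma^{-1/2}N(w)$. The map $\tilde\Phi:V\to L^2((0,T)\times M)\oplus L^2(\Sigma_+)$ defined by $\tilde\Phi(w):=(e^{\sigma\psi}(\Box_{a,g}+\bar q)w,\,\sqrt\sigma\,e^{\sigma\psi}\sqrt{|\partial_\nu\phi|}\,\partial_\nu w|_{\Sigma_+})$ is then injective, so $\mathcal F$ descends to a bounded linear functional of norm $\leq C\sigma^{-1/2}$ on $\tilde\Phi(V)$. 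Hahn--Banach extends it to the whole product space, and Riesz representation produces a pair $(v,\eta)\in L^2((0,T)\times M)\oplus L^2(\Sigma_+)$ with $\|v\|_{L^2((0,T)\times M)}\leq C\sigma^{-1/2}$ representing this extension; unwinding the identity back through integration by parts shows that $u:=U_0+e^{\sigma\psi}v$ is the sought solution, while $\eta$ encodes the (a priori unknown) Neumann trace $\partial_\nu u|_{\Sigma_+}$. The main technical obstacle is the boundary bookkeeping: $\tilde\Phi$ and $\mathcal F$ must be set up so that the \emph{prescribed} vanishing traces of $u$ (at $t=0$ and on $\Sigma_{+,\epsilon/2,-}$) feed into $\mathcal F$ as its two boundary pieces, whereas the \emph{unknown} traces of $u$ emerge as outputs of the Hahn--Banach/Riesz construction. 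The strictly positive buffer $\epsilon/2$ between $\partial M_{+,\epsilon,-}\subset U'$ and $\partial M_{+,\epsilon/2,-}$ is what supplies $|\partial_\nu\phi|\geq\epsilon/2$ on $\Sigma_{+,\epsilon/2,-}$, and hence what allows the $\Sigma_{+,\epsilon/2,-}$-piece of $\mathcal F$ to be absorbed by the $\Sigma_-$-term on the left of \eqref{c1a}.
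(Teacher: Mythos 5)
Your argument is correct and follows essentially the same route as the paper: both start from the cancellation of the $\sigma^2$- and $\sigma$-order terms in $e^{-\sigma(\beta t+x_1)}(\Box_{a,g}+q)\bigl(e^{\sigma(\beta t+x_1)}a^{-(d-2)/4}l\bigr)$ and then obtain the remainder by a Hahn--Banach/Riesz duality built on the Carleman estimate of Theorem \ref{c1} in its ``$+$'' form, with test functions vanishing at $t=0$ and at $t=T$ so that only $\partial_t w(0,\cdot)$, $\partial_\nu w|_{\Sigma_-}$ and the bulk term survive on the controlled side. The only difference is organizational: the paper packages the duality step as Lemma \ref{l3} (prescribing the trace on all of $\Sigma_-$ via a cutoff $\chi$ supported in $\{\pd_\nu\phi<-\epsilon/3\}$ so that the boundary datum lies in $L_{-\sigma,\pd_\nu\phi^{-1},-}$), whereas you inline the functional-analytic argument and restrict the boundary piece of $\mathcal F$ to $\Sigma_{+,\epsilon/2,-}$ --- two equivalent ways of keeping the weight $\abs{\pd_\nu\phi}^{-1}$ bounded on the set where the trace of $u$ is prescribed.
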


In order to prove existence of such solutions of \eqref{(5.1)} we need some weighted spaces. We set $s\in\R$ and we introduce  the spaces $L_s((0,T)\times M)$, $L_s(M)$,  and for all non negative measurable function $h$ on $\partial M$ the spaces $L_{s,h,\pm}$  defined respectively by
\begin{align*}
&L_s((0,T)\times M)=e^{-s(\beta t+ \phi(x))}L^2((0,T)\times M),\quad L_s(M)=e^{-s\phi(x)}L^2(M),
\\&L_{s,h,\pm}=\{f:\ e^{s(\beta t+ x_1)}h^{{1\over2}}(x)f\in L^2(\Sigma_{\pm})\},
\end{align*}
with the associated norms
\[\norm{u}_s=\left(\int_0^T\int_Me^{2s(\beta t+x_1)}\abs{u}^2dV_g(x)dt\right)^{\frac{1}{2}},\quad u\in L_s(Q),\]
\[\norm{u}_{s,0}=\left(\int_M e^{2sx_1}\abs{u}^2dx\right)^{\frac{1}{2}},\quad u\in L_s(M),\]
\[\norm{u}_{s,h,\pm}=\left(\int_{\Sigma_{\pm}} e^{2s(\beta t+x_1)}h(x)\abs{u}^2d\sigma_g(x)dt\right)^{\frac{1}{2}},\quad u\in L_{s,h,\pm}.\]

Combining  the Carleman estimate \eqref{c1a}  with the arguments used in  \cite[Lemma 3]{Ki2}, we obtain the following

\begin{lem}\label{l3}  Given $\sigma\geq \sigma_1$, with $\sigma_1$ the constant of Theorem \ref{c1}, and
\[v\in L_{-\sigma}((0,T)\times M),\quad v_-\in L_{-\sigma,\pd_\nu \phi^{-1},-},\quad v_0\in L_{-\sigma}(M),\]
there exists  $u\in L_{-\sigma}((0,T)\times M)$ such that:\\
1) $(a^{-1}\partial_t^2-\Delta_g+q)u=v$ in $(0,T)\times M$,\\
2) $u_{\vert \Sigma_{-}}=v_-,\ u_{|t=0}=v_0$,\\
3) $\norm{u}_{-\sigma}\leq  C\left(\sigma^{-1}\norm{v}_{-\sigma}+\sigma^{-\frac{1}{2}}\norm{v_-}_{-\sigma,\pd_\nu\phi^{-1},-}+\sigma^{-\frac{1}{2}}\norm{v_0}_{-\sigma,0}\right)$ with $C$ depending on  $T$,\\
 $M\geq\norm{q}_{L^\infty((0,T)\times M)}$.
\end{lem}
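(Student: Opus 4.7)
The plan is to construct $u$ by a Hahn-Banach / duality argument powered by Theorem \ref{c1} with the plus sign. Introduce the class of test functions
$$W = \{w \in C^2([0,T] \times M) : w|_{(0,T) \times \partial M} = 0,\ w|_{t=T} = \partial_t w|_{t=T} = 0,\ w|_{t=0} = 0,\ \partial_\nu w|_{\Sigma_+} = 0\}.$$
The first two conditions put $w$ in the domain of \eqref{c1a}; the extra conditions $w|_{t=0} = 0$ (which also forces $\nabla_g w(0,\cdot) = 0$) and $\partial_\nu w|_{\Sigma_+} = 0$ cause the three non-PDE contributions on the right of \eqref{c1a} to vanish, leaving
$$\sigma\|e^{\sigma\psi_+}\partial_t w(0,\cdot)\|^2_{L^2(M)} + \sigma\|\partial_\nu w\|^2_{\sigma,|\partial_\nu\phi|,-} + \sigma^2\|w\|^2_\sigma \leq C\|(\Box_{a,g} + q)w\|^2_\sigma.$$

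Next I would define a linear form on $\mathcal{P} = \{(\Box_{a,g} + q)w : w \in W\} \subset L_\sigma((0,T)\times M)$ by
$$L((\Box_{a,g}+q)w) = \int_0^T\!\!\int_M vw\,dV_g dt - \int_M a^{-1}v_0\, \partial_t w(0,\cdot)\, dV_g - \int_{\Sigma_-} v_- \partial_\nu w\, d\sigma_g dt.$$
The modified Carleman estimate makes $w \mapsto (\Box_{a,g}+q)w$ injective on $W$, so $L$ is well defined, and three applications of Cauchy-Schwarz, against each of the three left-hand terms above, yield
$$|L((\Box_{a,g}+q)w)| \leq C\bigl(\sigma^{-1}\|v\|_{-\sigma} + \sigma^{-1/2}\|v_-\|_{-\sigma,|\partial_\nu\phi|^{-1},-} + \sigma^{-1/2}\|v_0\|_{-\sigma,0}\bigr)\|(\Box_{a,g}+q)w\|_\sigma.$$
Extending $L$ to all of $L_\sigma$ by Hahn-Banach, and identifying the dual of $L_\sigma$ with $L_{-\sigma}$ via Riesz representation, produces $u \in L_{-\sigma}((0,T)\times M)$ satisfying the norm bound (3) together with $\int u \cdot (\Box_{a,g}+q)w\, dV_g dt = L((\Box_{a,g}+q)w)$ for all $w \in W$.

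It remains to read off the PDE and boundary conditions. Testing against $w \in C_c^\infty((0,T)\times M)\subset W$ (for which the two boundary summands in $L$ vanish) gives $(a^{-1}\partial_t^2 - \Delta_g + q)u = v$ in $\mathcal{D}'((0,T)\times M)$, and in particular $u \in H_{\Box_{a,g}}((0,T)\times M)$, so by Section 3 the traces on $\{0\}\times M$ and on $(0,T)\times\partial M$ are defined as distributions. Performing the standard integration by parts against a general $w \in W$ and subtracting the defining identity for $L$ produces
$$\int_M a^{-1}(u(0,\cdot) - v_0)\,\partial_t w(0,\cdot)\, dV_g + \int_{\Sigma_-} (u - v_-)\,\partial_\nu w\, d\sigma_g dt = 0,$$
and varying $w$ within $W$ so that $\partial_t w(0,\cdot)$ and $\partial_\nu w|_{\Sigma_-}$ independently exhaust dense subspaces forces $u(0,\cdot) = v_0$ and $u|_{\Sigma_-} = v_-$.

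The main obstacle is precisely this final trace-matching step: one must verify that the extra Neumann-type condition $\partial_\nu w|_{\Sigma_+} = 0$ built into $W$ does not destroy the density of the pair $(\partial_t w(0,\cdot),\partial_\nu w|_{\Sigma_-})$ in the natural function spaces. This is handled by localizing via smooth cutoffs supported away from $\Sigma_+$ and near $\Sigma_-$, respectively, combined with standard extensions from the boundary into a tubular neighborhood, in the spirit of \cite[Lemma 3]{Ki2} and \cite{KSU}.
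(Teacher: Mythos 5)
Your proposal is correct and follows essentially the same route as the paper, which does not print a proof of Lemma \ref{l3} but obtains it by ``combining the Carleman estimate \eqref{c1a} with the arguments used in \cite[Lemma 3]{Ki2}'' --- precisely the Hahn--Banach/Riesz duality scheme you describe, with the test class chosen so that the plus-sign version of \eqref{c1a} degenerates to a one-term right-hand side. Your identification of the trace-matching/density step as the residual technical point, and the cutoff-plus-extension fix for it, matches what the cited argument does.
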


Armed with this lemma we are now in position to prove Theorem \ref{t3}.
\begin{proof}[Proof of Theorem \ref{t3}]
Recall that in the polar normal coordinate $x'=\exp_y(r\theta)$ the function $\Box_{e\oplus g'}(e^{\sigma(\beta t+x_1)}l)$ will be given by
\[\begin{array}{l}\Box_{e\oplus g'}(e^{\sigma(\beta t+x_1)}l)(t,x_1,\exp_y(r\theta))=
\sigma^2(\beta^2-1+1-\beta^2)e^{\sigma(\beta t+x_1)} l\\
+2\sigma e^{\sigma(\beta t+x_1-i(1-\beta^2)^{1\over2} r)}(\beta\partial_t-\pd_{x_1}+i\sqrt{1-\beta^2}\pd_r+i\sqrt{1-\beta^2}{\pd_rb\over 4b})b(r,\theta)^{-1/4}\\
+e^{\sigma(\beta t+x_1-i(1-\beta^2)^{1\over2} r)}\Box_{e\oplus\tilde{g'}}b(r,\theta)^{-1/4}.\end{array}\]
Using the definition of $l$ in polar normal coordinate we deduce that
\[(b\partial_t-\pd_{x_1}+i\sqrt{1-\beta^2}\pd_r+i\sqrt{1-\beta^2}{\pd_rb\over 4b})b(r,\theta)^{-1/4}=0\]
and
\[e^{-\sigma(\beta t+x_1)}\Box_{e\oplus g'}(e^{\sigma(\beta t+x_1)}l)(t,x_1,\exp_y(r\theta))=e^{-i\sigma(\sqrt{1-\beta^2}) r}\Box_{e\oplus\tilde{g'}}b(r,\theta)^{-1/4}.\]
Therefore, we have
$$\norm{\Box_{e\oplus g'}(e^{\sigma(\beta t+x_1)}l)}_{-\sigma}\leq C$$
with $C>0$ independent of $\sigma$. Combining this estimate with \eqref{red}, we deduce that
\bel{t3d}\norm{\Box_{a,g}(e^{\sigma(\beta t+x_1)}a^{-{d-2\over 4}}l)}_{-\sigma}\leq C.\ee
Note that $z$ must satisfy
\begin{equation}
\label{w1}
\left\{
\begin{array}{l}z\in L^2((0,T)\times M), \\
(a^{-1}\partial_t^2-\Delta_g+q) (e^{\sigma(\beta t+x_1)}z)=-(\Box_{a,g}+q)(e^{\sigma(\beta t+x_1)}a^{-{d-2\over 4}}l)\ \ \textrm{in }(0,T)\times M,
\\
z(0,x)=-a^{-{d-2\over4}}l(x), \quad   x\in M,
\\
z(t,x)=-a^{-{d-2\over4}}l(x)\quad (t,x)\in\Sigma_{+,\epsilon/2,-}.
\end{array}
\right.\end{equation}
Let $\chi\in\mathcal C^\infty_0(M_0)$ be such that   supp$\chi\cap\partial M\subset \{x\in\partial M:\ \pd_\nu\phi(x)<-\epsilon/3\}$ and $\chi=1$ on $\{x\in\pd M:\ \pd_\nu\phi(x)<-\epsilon/2\}=\partial M_{+,\epsilon/2,-}$. Choose $v_-(t,x)=-e^{\sigma(\beta t+x_1)}a^{-{d-2\over 4}}(x)l(x)\chi(x)$, $(t,x)\in\Sigma_{-}$.  Since $v_-(t,x)=0$ for $t\in(0,T)$, $x\in \{x\in\partial M:\ \pd_\nu\phi(x)\geq-\epsilon/3\}$  we have
$v_-\in L_{-\sigma,\pd_\nu\phi^{-1},-}$. Fix also $v=-(\Box_{a,g}+q)(e^{\sigma(\beta t+x_1)}a^{-{d-2\over 4}}l)$ and $v_0(x)=-a^{-{d-2\over 4}}l(x)e^{\sigma x_1}$, $(t,x)\in Q$. From Lemma \ref{l3}, we deduce that there exists $w\in H_{\Box_{a,g}}((0,T)\times M)$ such that
\[
\left\{
\begin{array}{ll}
(a^{-1}\partial_t^2-\Delta_g+q) w=v=-(\Box_{a,g}+q)(e^{\sigma(\beta t+x_1)}a^{-{d-2\over 4}}l)&  \mbox{in}\; (0,T)\times M,
\\
w(0,x)=v_0(x)=-l(x)a^{-{d-2\over 4}}(x)e^{\sigma x_1}, &  x\in M,
\\
w(t,x)=v_-(t,x)=-e^{\sigma(\beta t+x_1)}a^{-{d-2\over 4}}(x)l(x)\chi(x),& (t,x)\in\Sigma_{-}.
\end{array}
\right.\]
Then, for $z=e^{-\sigma(\beta t+x_1)} w$ condition \eqref{w1} will be fulfilled. Moreover,  condition 3) of Lemma \ref{l3} and \eqref{t3d} imply
\[\begin{aligned}&\norm{z}_{L^2((0,T)\times M)}=\norm{w}_{-\sigma}\leq C\left(\sigma^{-1}\norm{v}_{-\sigma}+\sigma^{-\frac{1}{2}}\norm{v_-}_{-\sigma,\pd_\nu\phi^{-1},-}+\sigma^{-\frac{1}{2}}\norm{v_0}_{-\sigma,0}\right)\\
\ &\leq C\left(\sigma^{-1}(1+\norm{q}_{L^2((0,T)\times M)})+\sigma^{-\frac{1}{2}}\norm{a^{-{d-2\over 4}}\chi l\pd_\nu\phi^{-1/2}}_{L^2(\Sigma_{-})}+\sigma^{-\frac{1}{2}}\norm{a^{-{d-2\over 4}}l}_{L^2(M)}\right)\\
\ &\leq C\sigma^{-{1\over2}}\end{aligned}\]
with $C$ depending only on $\beta$, $U$, $M$, $T$ and $\norm{q}_{L^\infty((0,T)\times M)}$. Therefore,  estimate \eqref{CGO1b} holds.  Using the fact that $e^{\sigma(\beta t+x_1)} z=w\in H_{\Box_{a,g}}((0,T)\times M)$, we deduce that  $u$ defined by \eqref{CGO1a} is lying in $H_{\Box_{a,g}}((0,T)\times M)$ and is a solution of \eqref{(5.1)}. This completes the proof of Theorem \ref{t3}.
\end{proof}

\section{Proof of Theorem \ref{thm1}}
From now on we set $q=q_2-q_1$ on $(0,T)\times M$ and  we assume  that $q=0$ on $\R^2\times D\setminus ((0,T)\times M)$. Without loss of generality we assume that   we have $\partial M_{-,\epsilon,+}\subset V'$ with $\epsilon>0$ introduced in the beginning of the previous section and we fix   $\sigma >\max(\sigma_1,\sigma_0) $. According to Proposition \ref{p2}, we can introduce
\[u_1(t,x)=e^{-\sigma(\beta t+x_1)}a^{-{d-2\over 4}}(x)\left(k(t,x)+w(t,x) \right),\ (t,x) \in (0,T)\times M,\]
where $u_1\in H^1((0,T)\times M)$ satisfies $a^{-1}\partial_t^2u_1-\Delta_gu_1+q_1u_1=0$, $k$ given by \eqref{GO2} and  $w$ satisfies \eqref{p2a}. Moreover, in view of Theorem \ref{t3}, we consider $u_2\in H_{\Box_{a,g}}((0,T)\times M)$ a solution of \eqref{(5.1)} with $q=q_2$ of the form 
\[u_2(t,x)=e^{\sigma(\beta t+x_1)}\left(a^{-{d-2\over 4}}l(x)+z(t,x) \right),\quad (t,x)\in (0,T)\times M\]
with $l$ given by \eqref{GGO},  $z$ satisfying \eqref{CGO1b}, such that supp$(u_{2|(0,T)\times\pd M}) \subset U$ and $u_{2|t=0}=0$.
In view of Proposition \ref{p6}, there exists a unique solution $w_1\in H_{\Box_{a,g}} ((0,T)\times M)$ of
 \bel{eq3}
\left\{
\begin{array}{ll}
 a^{-1}\partial_t^2w_1-\Delta_gw_1 +q_1w_1=0 &\mbox{in}\ (0,T)\times M,
\\

\tau_{0}w_1=\tau_{0}u_2. &

\end{array}
\right.
\ee
Then, $u=w_1-u_2$ solves
  \bel{eq4}
\left\{\begin{array}{ll}
 a^{-1}\partial_t^2u-\Delta_gu +q_1u=(q_2-q_1)u_2 &\mbox{in}\ (0,T)\times M,
\\
u(0,x)=\partial_tu(0,x)=0 & \mathrm{on}\  M,\\

u=0 &\mbox{on}\ (0,T)\times\pd M
\end{array}\right.
\ee
and since $(q_2-q_1)u_2\in L^2((0,T)\times M)$, in view of     \cite[Theorem 2.1]{LLT} and the formula \eqref{red}, we deduce that $u\in \mathcal H$ and $\pd_\nu u\in L^2((0,T)\times \pd M)$.  We use again the notation $\overline{M}=[0,T]\times M$ and $\overline{g}=dt^2+g$. Since $u,\ u_1\in H^1((0,T)\times M)$ and $a^{-1}\pd_t^2u-\Delta_gu\in L^2((0,T)\times M)$, one can check that $(a^{-1}\pd_tu,-\nabla_gu)\in H_{div}=\{F\in L^2(\overline{M};T\overline{M}):\ div_{\overline{g}}F\in L^2(\overline{M})\}$. Therefore, by density one can check that
$$\begin{array}{l}\int_{\overline{M}}(a^{-1}\pd_t^2u-\Delta_gu)u_1dV_g(x)dt\\
\ \\
=-\int_{\overline{M}}\left\langle (a^{-1}\pd_tu,-\nabla_gu),\nabla_{\overline{g}}u_1\right\rangle_{\overline{g}}dV_g(x)dt+\left\langle \left\langle (a^{-1}\pd_tu,-\nabla_gu),\overline{\nu}\right\rangle_{\overline{g}},u_1\right\rangle_{H^{-{1\over2}}(\pd\overline{M}),H^{1\over2}(\pd\overline{M})}\end{array}$$
with $\overline{\nu}$ the outward unit normal vector to $\overline{M}$. From this formula we deduce that
$$\begin{array}{l}\int_0^T\int_{M}(a^{-1}\pd_t^2u-\Delta_gu)u_1dV_g(x)dt\\
\ \\
=-\int_0^T\int_{M}(a^{-1}\pd_tu\pd_tu_1-\left\langle \nabla_{g}u,\nabla_{g}u_1\right\rangle_{g})dV_g(x)dt+\left\langle \left\langle (a^{-1}\pd_tu,-\nabla_gu),\overline{\nu}\right\rangle_{\overline{g}},u_1\right\rangle_{H^{-{1\over2}}(\pd\overline{M}),H^{1\over2}(\pd\overline{M})}.\end{array}$$
In the same way, we find 
$$\begin{array}{l}\int_0^T\int_{M}(a^{-1}\pd_t^2u_1-\Delta_gu_1)udV_g(x)dt\\
\ \\
=-\int_0^T\int_{M}(a^{-1}\pd_tu\pd_tu_1-\left\langle \nabla_{g}u,\nabla_{g}u_1\right\rangle_{g})dV_g(x)dt+\left\langle \left\langle (a^{-1}\pd_tu_1,-\nabla_gu_1),\overline{\nu}\right\rangle_{\overline{g}},u\right\rangle_{H^{-{1\over2}}(\pd\overline{M}),H^{1\over2}(\pd\overline{M})}.\end{array}$$
Combining these two formulas, we get
$$\begin{array}{l}\int_0^T\int_{M}(q_2-q_1)u_2dV_g(x)dt\\
\ \\
=\int_0^T\int_{M}(a^{-1}\pd_t^2u-\Delta_gu+q_1u)u_1dV_g(x)dt-\int_0^T\int_{M}(a^{-1}\pd_t^2u_1-\Delta_gu_1+q_1u_1)udV_g(x)dt\\
\ \\
=\left\langle \left\langle (a^{-1}\pd_tu,-\nabla_gu),\overline{\nu}\right\rangle_{\overline{g}},u_1\right\rangle_{H^{-{1\over2}}(\pd\overline{M}),H^{1\over2}(\pd\overline{M})}-
\left\langle \left\langle (a^{-1}\pd_tu_1,-\nabla_gu_1),\overline{\nu}\right\rangle_{\overline{g}},u\right\rangle_{H^{-{1\over2}}(\pd\overline{M}),H^{1\over2}(\pd\overline{M})}\end{array}$$
On the other hand, condition \eqref{thm1a} implies that $u_{|t=T}=\partial_\nu u_{|V}=0$ and  we obtain
\begin{equation}\label{t3a} \int_0^T\int_Mqu_2u_1dV_g(x)dt=-\int_{(0,T)\times\pd M\setminus V}\partial_\nu uu_1d\sigma_g(x)dt+\int_M a^{-1}\partial_tu(T,x)u_1(T,x)dV_g(x).\end{equation}
Applying  the Cauchy-Schwarz inequality to the first expression on the right hand side of this formula, we get
\[\begin{aligned}\abs{\int_{(0,T)\times\pd M\setminus G}\partial_\nu uu_1d\sigma_g(x)dt}&\leq\int_{{\Sigma}_{+,\epsilon,+}}\abs{\partial_\nu ue^{-\sigma(\beta t+x_1)}(k(t,x)+w)}d\sigma_g(x)dt  \\
 \ &\leq C\left(\int_{{\Sigma}_{+,\epsilon,+}}\abs{e^{-\sigma(\beta t+_1x)}\partial_\nu u}^2d\sigma_g(x)dt\right)^{\frac{1}{2}}\end{aligned}\]
for some $C$ independent of $\sigma$. Here we have used both \eqref{p2a}, the fact that $|k|$ is independent of $\sigma$ and the fact that $((0,T)\times\pd M\setminus V)\subset {\Sigma}_{+,\epsilon,+}$. In the same way, we have
\[\begin{aligned}\abs{\int_M a^{-1}\partial_tu(T,x)u_1(T,x)dV_g(x)}&\leq\norm{a^{-1}}_{L^\infty(M)}\int_{M}\abs{\partial_t u(T,x)e^{-\sigma(\beta t+x_1)}\left(k(T,x)+w(T,x)\right)}dV_g(x) \\
 \ &\leq C\left(\int_{M}\abs{e^{-\sigma(\beta t+x_1)}\partial_t u(T,x)}^2dV_g(x)\right)^{\frac{1}{2}}.\end{aligned}\]
Combining these estimates with the Carleman estimate \eqref{c1a} and the facts that $u_{|t=T}=\partial_\nu u_{|\Sigma_{-}}=0$ and ${\partial M}_{+,\epsilon,+}\subset {\partial M}_{+}$, we find
\begin{eqnarray}&&\abs{\int_0^T\int_M(q_2-q_1)u_2u_1dV_g(x)dt}^2\cr
&&\leq 2C\left(\int_{{\Sigma}_{+,\epsilon,+}}\abs{e^{-\sigma(\beta t+x_1)}\partial_\nu u}^2d\sigma_g(x)dt+\int_M \abs{e^{-\sigma(\beta t+x_1)}\partial_tu(T,x)}^2dV_g(x)\right)\cr
&&\leq 2\epsilon^{-1}C\left(\int_{{\Sigma}_{+}}\abs{e^{-\sigma(\beta t+x_1)}\partial_\nu u}^2\pd_\nu\phi(x) d\sigma_g(x)dt+\int_M \abs{e^{-\sigma(\beta T+x_1)}\partial_tu(T,x)}^2dV_g(x)\right)\cr
&&\leq {\epsilon^{-1}C\over\sigma}\left(\int_0^T\int_M\abs{ e^{-\sigma(\beta t+x_1)}(a^{-1}\partial_t^2-\Delta_x +q_1)u}^2dV_g(x)dt\right)\cr
&&\leq {\epsilon^{-1}C\over\sigma}\left(\int_0^T\int_M\abs{ e^{-\sigma(\beta t+x_1)}qu_2}^2dV_g(x)dt\right)={\epsilon^{-1}C\over\sigma}\left(\int_0^T\int_M\abs{ q}^2(1+\abs{z})^2dV_g(x)dt\right).\end{eqnarray}
Here $C>0$ stands for some generic constant independent of $\sigma$.
It follows that
\begin{equation}\label{t3cc}\lim_{\sigma\to+\infty}\int_Qqu_2u_1dtdx=0.\end{equation}
On the other hand, using the fact that $q=0$ on $\R^2\times M_0\setminus( (0,T)\times M)$ we have
\begin{align*}
&\int_{0}^T\int_Mqu_1u_2d V_g(x)dt
\\&\quad=\int_{\R}\int_\R \int_{M_0}q(t,x_1,x')k(t,x_1,x')a^{-{d-2\over 2}}l(x_1,x')dV_{g'}(x')dx_1dt+ \int_0^T\int_MZ(t,x) dV_g(x)dt
\end{align*}
with $ Z(t,x)=q(t,x)(a^{-{d-2\over 4}}z(t,x)k(t,x)+a^{-{d-2\over 2}}w(t,x)l(x)+a^{-{d-2\over 4}}z(t,x)w(t,x))$. Then, in view of \eqref{p2a}  and \eqref{CGO1b}, an application of the Cauchy-Schwarz inequality yields
\[\abs{\int_0^T\int_MZ(t,x)  dV_g(x)dt}\leq C\sigma^{-\frac{1}{2}}\]
with $C$ independent of $\sigma$. Combining this with \eqref{t3cc}, we deduce that 
\[\int_{\R}\int_\R \int_{M_0}a^{-{d-2\over 2}}(x_1,x')q(t,x_1,x')k(t,x_1,x')l(x_1,x')dV_{g'}(x')dx_1dt=0.\]
Fixing $y\in\pd D$, $\tau_+(y,\theta)$ the time of existence in $D$ of the maximal geodesic $\gamma_{y,\theta}$ satisfying $\gamma_{y,\theta}(0)=y$ and $\gamma_{y,\theta}'(0)=\theta$, we obtain in polar normal coordinate
\[\int_{\R}\int_\R\int_{S_yM_1}\int_{0}^{\tau_+(y,\theta)}a^{-{d-2\over 2}}(x_1,r,\theta)q(t,x_1,r,\theta)k(t,x_1,r,\theta)l(x_1,r,\theta)dV_{\tilde{g}}(r,\theta)dx_1dt=0\]
with $dV_{\tilde{g}}$ the Riemanian volume form in polar normal coordinate given by $b (r,\theta)^{1/2}drd\theta$. It follows from \eqref{GO2} and \eqref{GGO}, that   for any $h\in H^2(S_yM_1)$, $\mu\in\R$ and $\beta\in[1/2,1]$, we have
\[\int_{S_yM_1}\left(\int_{\R}\int_\R\int_{0}^{\tau_+(y,\theta)}a^{-{d-2\over 2}}(x_1,r,\theta)q(t,x_1,r,\theta)e^{-i{\mu (t+\beta x_1)}}drdx_1dt\right)h(\theta)d\theta=0\]
which implies that
\bel{t1d}\int_{\R}\int_\R\int_{0}^{\tau_+(y,\theta)}a^{-{d-2\over 2}}(x_1,r,\theta)q(t,x_1,r,\theta)e^{-i{\mu (t+\beta x_1)}}drdx_1dt=0,\quad \theta\in S_yM_1.\ee
We recall that the  geodesic ray transform $I$ on $\pd _+SD:=\{(x,\xi)\in SD:\ x\in\pd D,\ \left\langle \xi,\nu(x)\right\rangle_{g'}<0\}$ is defined by
\[I f(t,x_1,x',\theta)=\int_0^{\tau_+(x,\theta)}f(\gamma_{x',\theta}(t))dt,\quad (x',\theta)\in \pd _+SD,\ \ f\in L^1(\R\times\R\times D).\]
Fixing $\mathcal F_{t,x_1}a^{-{d-2\over 2}}q$ the partial Fourier transform of $a^{-{d-2\over 2}}q$ with respect to $t\in\R$ and $x_1\in\R$ given by 
\[\mathcal F_{t,x_1}a^{-{d-2\over 2}}q(\xi_1,\xi_2,x')=(2\pi)^{-1}\int_{\R}\int_\R a^{-{d-2\over 2}}(x_1,x')q(t,x_1,x')e^{-i{(\xi_1t+\xi_2x_1)}}dx_1dt,\quad \xi=(\xi_1,\xi_2)\in \R^2\]
we obtain from \eqref{t1d} that for any $\mu\in \R$ and all $\beta\in[{1\over2},1]$
\bel{end}\mathcal F_{t,x_1} [I(a^{-{d-2\over 2}}q)(\cdot,\cdot,y,\theta)](\mu,b\mu)=\int_{\R}\int_\R\int_{0}^{\tau_+(y,\theta)}a^{-{d-2\over 2}}(x_1,r,\theta)q(t,x_1,r,\theta)e^{-i{\mu (t+\beta x_1)}}drdx_1dt=0\ee
holds true.
Since for all $\theta\in S_y(D)$, such that $(y,\theta)\in\pd _+SD$, the function $Q:(t,x_1)\mapsto I(a^{-{d-2\over 2}}q)(t,x_1,y,\theta)\in L^\infty(\R^2)$ is supported on $[0,T]\times [-R,R]$, its Fourier transform is analytic 
and \eqref{end} implies that $Q=0$. Therefore, for almost every $(t,x_1)\in\R^2$ we have $I(a^{-{d-2\over 2}}q)(t,x_1,y,\theta)=0$.
Using the injectivity of the geodesic ray transform (e.g. \cite[Proposition 7.2]{DKSU}) , by varying $y\in \pd D$, we deduce that  $q=0$. This completes the proof of Theorem \ref{thm1}.

\vspace{0.5cm}
\noindent{\bf Acknowledgements.}
LO was partly supported by EPSRC grant EP/L026473/1.

\noindent {\sc Yavar Kian}, Aix Marseille Univ, Univ Toulon, CNRS, CPT, Marseille, France.\\
E-mail: {\tt yavar.kian@univ-amu.fr}. \vspace*{.5cm} \\
\noindent {\sc Lauri Oksanen}, Department of Mathematics, University College London, Gower Street, London, WC1E 6BT, UK.\\
E-mail: {\tt l.oksanen@ucl.ac.uk}. 

\end{document}